\newtheorem{theorem}{Theorem}[section]
\newtheorem{corollary}[theorem]{Corollary}
\newtheorem{lemma}[theorem]{Lemma}
\newtheorem{proposition}[theorem]{Proposition}
\theoremstyle{definition}
\newtheorem{example}[theorem]{Example}
\newtheorem{remark}[theorem]{Remark}
\numberwithin{equation}{section}
\newenvironment{acknowledge}{\noindent\textbf{Acknowledgments.}}{}
\renewcommand{\min}[1]{\mathrm{min}\!\left\{#1\right\}}
\renewcommand{\max}[1]{\mathrm{max}\!\left\{#1\right\}}
\newcommand{\lcm}[1]{\mathrm{lcm}\!\left\{#1\right\}}
\renewcommand{\gcd}[1]{\mathrm{gcd}\!\left\{#1\right\}}
\newcommand{\sconv}[1]{\mathrm{conv}\!\left\{#1\right\}}
\newcommand{\Vol}[1]{\mathrm{Vol}\!\left(#1\right)}
\newcommand{\pos}[1]{\mathrm{pos}\!\left(#1\right)}
\newcommand{\sspan}[1]{\mathrm{span}\!\left\{#1\right\}}
\newcommand{\V}[1]{\mathcal{V}\!\left(#1\right)}
\newcommand{\F}[1]{\mathcal{F}\!\left(#1\right)}
\newcommand{\0}{\mathbf{0}}
\newcommand{\idx}{\ell}
\newcommand{\locidx}{l}
\newcommand{\maxlocidx}{m}
\newcommand{\order}{o}
\newcommand{\intr}[1]{{#1}^\circ}
\newcommand{\Z}{\mathbb{Z}}
\newcommand{\Q}{\mathbb{Q}}
\newcommand{\R}{\mathbb{R}}
\newcommand{\rand}{\partial}
\newcommand{\card}[1]{{\mid\! #1 \!\mid}}
\newcommand{\pro}[2]{\left \langle {\eta}_{#1}, #2 \right \rangle}
\newcommand{\spro}[2]{\left \langle #1, #2 \right \rangle}
\newcommand{\Hom}[1]{\mathrm{Hom}\!\left(#1\right)}
\newcommand{\st}{\,:\,}
\begin{document}
\author[A.~M.~Kasprzyk]{Alexander M.~Kasprzyk}
\address{IMSAS, University of Kent, Canterbury, CT2 7NF, United Kingdom}
\email{a.m.kasprzyk@kent.ac.uk}
\author[M.~Kreuzer]{Maximilian Kreuzer}
\address{Institute for Theoretical Physics, Vienna University of Technology, Wiedner Hauptstrasse~8-10, 1040 Vienna, Austria}
\email{maximilian.kreuzer@tuwien.ac.at}
\author[B.~Nill]{Benjamin Nill}
\address{Freie Universit\"{a}t Berlin, Institut f\"{u}r Mathematik, Arbeitsgruppe Gitterpolytope, Arnimallee~3, 14195 Berlin, Germany}
\email{nill@math.fu-berlin.de}
\subjclass[2000]{52B20 (Primary); 14M25, 14Q10 (Secondary)}
\title[Combinatorial classification of toric log del Pezzo surfaces]{On the combinatorial classification of\\ toric log del Pezzo surfaces}
\begin{abstract}
Toric log del Pezzo surfaces correspond to convex lattice polygons containing the origin in their interior and having only primitive vertices. An upper bound on the volume and on the number of boundary lattice points of these polygons is derived in terms of the index $\idx$. Techniques for classifying these polygons are also described: a direct classification for index two is given, and a classification for all $\idx\leq16$ is obtained.
\end{abstract}
\maketitle
\section{Introduction}\label{sec:introduction}
Motivated by the algebro-geometric question of classifying toric log del Pezzo surfaces we investigate, from a purely combinatorial viewpoint, lattice polygons containing the origin in their interior.

A normal complex surface $X$ is called a \emph{log del Pezzo surface} if it has at worst log terminal singularities and if its anticanonical divisor $-K_X$ is a $\Q$-Cartier ample divisor. The smallest positive multiple $\idx$ for which $-\idx K_X$ is Cartier is called the \emph{index} of $X$. Such surfaces have been studied extensively: for example by Nukulin~\cite{Nik89a,Nik88,Nik89}, Alexeev and Nukulin~\cite{AN06}, and Nakayama~\cite{Nak07}. There has also been considerable emphasis on classification results in the rank one case (i.e. when the Picard number is one): see~\cite{Ye02,Koj03}.

If, in addition to being a log del Pezzo surface, $X$ is also toric (i.e. $X$ contains an algebraic torus as a dense open subset, together with an action of the torus on $X$ which extends the natural action of the torus on itself) then we call $X$ a \emph{toric log del Pezzo surface}. There exists a bijective correspondence between toric log del Pezzo surfaces and certain convex lattice polygons: the \emph{LDP-polygons}.

Fix a lattice $N\cong\Z^2$ and let $Q\subset N_\Q:=N\otimes_\Z\Q$ be a lattice polygon; i.e. $Q$ is the convex hull of finitely many lattice points, and has non-empty interior. We denote the vertices of $Q$ by $\V{Q}$ and the facets (also called edges) by $\F{Q}$. By the volume $\Vol{Q}$ we mean the normalised volume, which equals twice the Euclidean volume. By $\rand Q$ we mean the boundary of $Q$.

\begin{itemize}
\item $Q$ is called an \emph{IP-polygon} if it contains the origin in its (strict) interior; we write $\0\in\intr{Q}$.
\item An IP-polygon $Q$ is called an \emph{LDP-polygon} if the vertices of $Q$ are primitive lattice points, i.e. if no lattice point lies strictly between the origin and a vertex.
\end{itemize}

Let $Q$ be an LDP-polygon and let $X_Q$ be the toric surface whose fan is generated by the faces of $Q$. Then $X_Q$ is a log del Pezzo surface. Furthermore any toric log del Pezzo surface can be derived in this fashion. Two toric log del Pezzo surfaces are isomorphic if and only if the corresponding polygons are unimoduar equivalent. The toric log del Pezzo surface has rank one if and only if the polygon is a triangle. For further details on toric varieties consult~\cite{Oda78,Ful93}. For more information about LDP-polygons see~\cite[\S6]{Dais06}, \cite[\S1]{Dais07} and~\cite[\S2]{DN08}.

Let $\spro{\cdot}{\cdot}$ be the pairing between the lattice $N$ and its dual $M:=\Hom{N,\Z}$. Let $F\in\F{Q}$ be a facet of $Q$. The unique primitive lattice point in the dual lattice defining an outer normal of 
$F$ is denoted by $\eta_F$. The integer $\locidx_F:=\pro{F}{F}$ equals the integral distance between $\0$ and $F$, and is called the \emph{local index} of $F$ (with respect to $Q$).

We now define three important invariants of $Q$:
\begin{itemize}
\item The \emph{order} is given by $\order_Q:=\min{k\in\Z_{\geq 1}\st\intr{(Q/k)}\cap N=\{\0\}}$;
\item The \emph{maximal local index} is given by $\maxlocidx_Q:=\max{\locidx_F\st F\in\F{Q}}$;
\item The \emph{index} is given by $\idx_Q := \lcm{\locidx_F\st F \in \F{Q}}$.
\end{itemize}
Amongst these invariants is the following hierarchy:
\begin{equation}
\label{eq:hierarchy}
\order_Q\leq\maxlocidx_Q\leq\idx_Q.
\end{equation}
Figure~\ref{fig:strict_inequalities} gives an example of an LDP-polygon $Q$ for which the inequalities are strict.

\begin{figure}[htbp]
\centering
\includegraphics[scale=0.8]{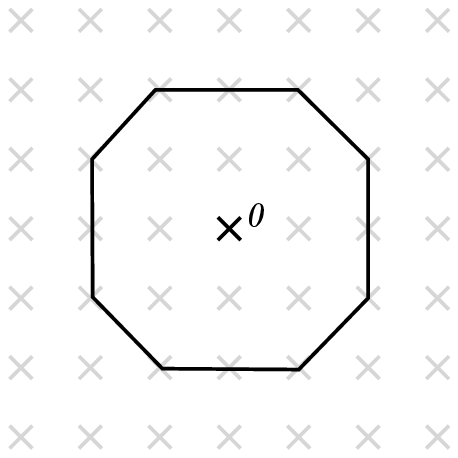}
\caption{An example of an LDP-polygon with $\order_Q=2$, $\maxlocidx_Q=3$, and $\idx_Q=6$.}
\label{fig:strict_inequalities}
\end{figure}

Whilst the second inequality is trivial, let us explain the first. Let $k=\order_Q$. Then there exists a lattice point $m\neq\0$ in the interior of $Q/(k-1)$. This lattice point is contained in the cone $\pos{F}:=\R_{\geq 0} F$ over some facet $F$ of $Q$. Therefore, $1\leq\pro{F}{m}<\locidx_F/(k-1)$. This implies that $k-1<\locidx_F$, and thus $k \leq\locidx_F$.

It follows from a more general result of Lagarias and Ziegler~\cite{LZ91} that, up to unimodular equivalence, there are only finitely many IP-polygons $Q$ of order $\order_Q \leq k$, for any positive integer $k$. Note that we do not yet know a sharp bound on the maximal volume in terms of the order (cf.~\cite{Pik01}), although there exist candidates (see Example~\ref{eg:triangles}).

In Section~\ref{sec:volume_bound} we provide asymptotically sharp upper bounds in terms of the maximal local index. It is unknown whether these bounds are also asymptotically optimal for LDP-polygons. Theorem~\ref{thm:bound_boundary} and Corollary~\ref{cor:bound_volume} are summarised in the following statement:
\begin{theorem}\label{thm:summary_bounds}
Let $Q$ be an IP-polygon of maximal local index $\maxlocidx_Q = k \geq 2$. Then:
\[\card{\rand Q \cap N}\leq4k^2+8k,\quad\Vol{Q}\leq4k^3+8k^2.\]
\end{theorem}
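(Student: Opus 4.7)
The plan is to reduce the volume bound to the boundary bound, and then to obtain the boundary bound by separately controlling each facet's lattice length and the total facet count.

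\emph{Volume from boundary.} Decomposing $Q$ into lattice triangles with apex at $\0$ and base on each facet $F$, the normalised volume of each triangle is $\locidx_F\, a_F$, where $a_F$ denotes the lattice length of $F$. Summing over facets yields the identity $\Vol{Q}=\sum_{F\in\F{Q}}\locidx_F\, a_F$. Since each $\locidx_F\leq k$ by hypothesis, we obtain $\Vol{Q}\leq k\cdot\card{\rand Q\cap N}$, so the volume bound follows immediately from the boundary bound.

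\emph{Strategy for the boundary bound.} Exploiting the factorisation $4k^2+8k=(2k+4)\cdot 2k$, I would aim to prove two separate estimates: (i) each facet satisfies $a_F\leq 2k$, and (ii) $Q$ has at most $2k+4$ facets. For (i), fix coordinates in which $\eta_F=(0,1)$ and $F$ lies on the horizontal line at height $\locidx_F\leq k$, so that the two vertices of $F$ take the form $v_i=(x_i,\locidx_F)$. Writing the adjacent primitive outer normals as $\eta_{F_\pm}=(p_\pm,q_\pm)$ with $p_\pm\neq 0$, the equations $\spro{\eta_{F_\pm}}{v_{1,2}}=\locidx_{F_\pm}$ yield closed-form expressions $x_i=(\locidx_{F_\pm}-q_\pm\locidx_F)/p_\pm$. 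Combining integrality of $v_i$, primitivity of $\eta_{F_\pm}$, and the IP-condition constraining how $\eta_{F_-}$ and $\eta_{F_+}$ must interact around $\0$, the target is $\card{x_i}\leq k$, whence $a_F=\card{x_1-x_2}\leq 2k$.

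\emph{Facet count via the order.} For (ii), the hierarchy~(\ref{eq:hierarchy}) gives $\order_Q\leq\maxlocidx_Q=k$, so the rescaled polygon $Q/k$ contains $\0$ as its unique interior lattice point. A two-dimensional Hensley-type volume estimate then controls $\Vol{Q/k}$ by an absolute constant, and a combinatorial analysis of consecutive primitive outer normals in the fan of $Q$ converts this into the facet-count bound $n\leq 2k+4$. Multiplying (i) and (ii) gives $\card{\rand Q\cap N}=\sum_{F} a_F\leq (2k+4)\cdot 2k=4k^2+8k$, as required.

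\emph{Main obstacle.} The delicate step is the per-facet estimate (i). The naive bound $\card{x_i}\leq(\locidx_{F_\pm}+\card{q_\pm}\locidx_F)/\card{p_\pm}$ involves the a priori unbounded integer $q_\pm$, so $\card{x_i}\leq k$ cannot be read off term by term. The required control must come from the global circular structure of the fan---from the way the primitive normals wrap once around $\0$ and so force $\eta_{F_-}$ and $\eta_{F_+}$ to constrain one another. I expect this to be the technical heart of Section~\ref{sec:volume_bound}, with the volume-to-boundary reduction and the facet-count estimate comparatively routine.
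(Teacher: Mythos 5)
Your reduction of the volume bound to the boundary bound is correct and is precisely the paper's inequality~(\ref{eq:inequi}), $\Vol{Q}\leq\maxlocidx_Q\card{\rand Q\cap N}$. The rest of the plan, however, rests on the per-facet estimate (i), $a_F\leq 2k$, and this statement is false. The equality case of Proposition~\ref{prop:first_IP_bound} (Example~\ref{eg:triangles} with $a=b=k$) is the triangle with vertices $(\pm k(k+1),k)$ and $(0,-1)$: it is an IP-polygon with $\maxlocidx_Q=k$, yet its top facet has lattice length $2k(k+1)$. Even for LDP-polygons the best one can say is quadratic in $k$ (Proposition~\ref{prop:facetbound} and Corollary~\ref{cor:first_LDP_bound} give $\card{F\cap N}\leq 2k^2$ resp.\ $2k(k+1)$, and this order is attained). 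So the factorisation $4k^2+8k=(2k+4)\cdot 2k$ cannot be realised as (max facet length)$\times$(number of facets): a single facet can already account for $\Theta(k^2)$ of the boundary points, and no amount of control on the adjacent normals will repair your target $\card{x_i}\leq k$. (The facet-count bound $n\leq 2k+4$ is also unsubstantiated; the paper's Lemma~\ref{lem:ecken} gives only $4\maxlocidx_Q+1$ vertices, and only for LDP-polygons.)

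The paper's argument is structured quite differently and you will need its key idea. One takes $p:=\sum_{x\in\rand Q\cap N}x$ and chooses the facet $F$ with $p\in\pos{F}$ (a \emph{special facet} in the sense of {\O}bro), so that $\pro{F}{p}\geq 0$. The positive part of this sum is controlled by $\card{F\cap N}\leq 2k(k+1)+1$ (Proposition~\ref{prop:first_IP_bound} combined with $\order_Q\leq k$) together with the fact that at each height $0\leq j<\locidx_F$ there are at most two boundary lattice points; this bounds $-\sum_{\pro{F}{x}<0}\pro{F}{x}$ by $2k^3+3k^2$. On the other hand, at each negative height there are again at most two boundary points (except possibly the bottom face), so $r$ points at negative height force that same sum to be at least roughly $r^2/4$. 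The resulting quadratic inequality gives $r=O(k^{3/2})$, and the total count $\card{F\cap N}+2\locidx_F+r$ comes out below $4k^2+8k$. In short: the boundary is dominated by one possibly very long facet plus a lower-order remainder, not by many uniformly short facets. As written, your proposal has a genuine gap at its technical heart.
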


As implied by the terminology, when $Q$ is an LDP-polygon the index $l_Q$ of $Q$ equals the index $l$ of $X_Q$. The \emph{dual polygon} $Q^\vee$ is defined as:
\[Q^\vee:=\{u\in M_\Q\st\spro{u}{v}\geq -1\text{ for all }v\in Q\}.\]
$Q^\vee$ is a polygon containing the origin in its interior, with:
\[\V{Q^\vee}=\{-\eta_F/\locidx_F\st F\in\F{Q}\}.\]
Hence the index $\idx_Q$ equals the smallest positive integer $k$ such that $k Q^\vee$ is a lattice polygon; i.e., the smallest positive multiple such that $-k K_Q$ is a Cartier divisor. 

It is well-known that:
\[\order_Q=1\iff\maxlocidx_Q=1\iff\idx_Q=1.\]
Such polygons are called \emph{reflexive} (and the corresponding varieties \emph{Gorenstein}). There are exactly sixteen reflexive polygons, of which five are triangles. It is worth observing that the definitions generalise to higher dimensions; reflexive polytopes have been classified up to dimension four~\cite{KS98b,KS00} and are of particular relevance to the study of Calabi-Yau hypersurfaces~\cite{Bat94}. It is conjectured that their maximal volume in fixed dimension is the same as the maximal volume for IP-polygons of order one, however effective bounds are still open (see~\cite{Nill04}).

In Section~\ref{sec:index_2_classify} we classify all LDP-polygons with $l_Q=2$: there are thirty cases. 
Of these, seven are known to be triangles~\cite[Theorem~6.12]{Dais06}; this should be contrasted with the non-toric results of~\cite{Koj03}. Dais has also classified all LDP-triangles with index three~\cite{Dais07}, yielding eighteen cases.

In Sections~\ref{sec:first_classifiction_alg} and~\ref{sec:second_classification_alg} we present two independent methods for classifying all LDP-polygons. The first is inductive on the maximum local index $\maxlocidx_Q$ and uses Theorem~\ref{thm:summary_bounds}. The second fixes the index $\idx_Q$ and employs the concept of special facets introduced in~\cite{Obr07}. A computer algorithm has been implemented which has classified all LDP-polygons with $\idx_Q\leq16$. The resulting classifications can be obtained via the Graded Rings Database~\cite{GRDb} at \href{http://malham.kent.ac.uk/}{\texttt{http://malham.kent.ac.uk/}} and are summarised below.

\begin{theorem}\label{thm:classification_data}
For each positive integer $k$ let $n(k)$ be the number of isomorphism classes of toric log del Pezzo surfaces with index $\idx=k$, and let $m(k)$ be the number of rank one toric log del Pezzo surfaces with index $\idx=k$. Then:

\[
\begin{array}{|r|c|c|c|c|c|c|c|c|}
\hline
k&1&2&3&4&5&6&7&8\\\hline
n(k)&16&30&99&91&250&379&429&307\\\hline
m(k)&5&7&18&13&33&26&45&27\\\hline
\multicolumn{9}{c}{}\vspace{-0.6em}\\\hline
k&9&10&11&12&13&14&15&16\\\hline
n(k)&690&916&939&1279&1142&1545&4312&1030\\\hline
m(k)&51&51&67&53&69&74&133&48\\\hline
\end{array}
\]
\end{theorem}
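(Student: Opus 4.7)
The plan is to establish this theorem computationally, by implementing two independent enumeration algorithms and comparing their outputs. Since the correspondence between toric log del Pezzo surfaces and LDP-polygons is bijective up to unimodular equivalence, and rank one surfaces correspond to triangles, the problem reduces to enumerating LDP-polygons with $\idx_Q=k$ up to unimodular equivalence and counting them (respectively, counting those that are triangles).

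I would set up the first algorithm by induction on the maximal local index $\maxlocidx_Q$. For each candidate value of $\maxlocidx_Q$ (noting that $\maxlocidx_Q\leq\idx_Q\leq 16$), Theorem~\ref{thm:summary_bounds} gives that $\card{\rand Q\cap N}\leq 4\maxlocidx_Q^2+8\maxlocidx_Q$ and $\Vol{Q}\leq 4\maxlocidx_Q^3+8\maxlocidx_Q^2$, confining the search to a finite, explicit region. One then enumerates primitive vertex sequences on the boundary of such a polygon, adds facets inductively, and checks that the origin lies in the strict interior. A second, independent algorithm fixes $\idx_Q=k$ directly and exploits the notion of special facet from~\cite{Obr07}: one first chooses (up to symmetry) the special facet, uses the constraints on $\locidx_F$ dividing $k$ to restrict outer normals, and then builds up the remaining vertices compatible with the dual polygon $kQ^\vee$ being a lattice polygon.

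The key remaining step is to reduce the enumerated lists modulo unimodular equivalence over $\mathrm{GL}_2(\Z)$. I would compute a canonical normal form for each polygon (for instance by cycling through vertex orderings, applying the finite set of admissible lattice automorphisms that fix a chosen vertex, and lexicographically minimising the resulting coordinate tuple), and store a hash of the normal form to eliminate duplicates. For each $k\leq 16$, the final counts $n(k)$ are the sizes of the deduplicated lists; restricting to those polygons with exactly three vertices yields $m(k)$. The data can then be deposited in the Graded Rings Database for independent verification.

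The hard part will not be any single mathematical step but rather controlling the combinatorial growth at larger indices: the bound $\Vol{Q}\leq 4k^3+8k^2$ is still permissive enough that naive enumeration blows up (as witnessed already by $n(15)=4312$), so the implementation must use aggressive pruning (stopping partial polygons that already violate local-index or primitivity constraints), and the unimodular-equivalence test must be efficient. The principal sanity check is that the two algorithms, which share essentially no structural assumptions beyond Theorem~\ref{thm:summary_bounds} and the definitions, produce identical tables for all $k\leq 16$; this mutual agreement, together with the match against the known counts $n(1)=16$ for reflexive polygons~\cite{KS98b} and $n(2)=30$ from the direct classification of Section~\ref{sec:index_2_classify}, constitutes the proof.
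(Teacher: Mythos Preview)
Your proposal is correct and follows essentially the same approach as the paper: a computer classification via two independent algorithms---one bounding by maximal local index using the volume bounds, the other fixing the index and working from a special facet in the sense of \cite{Obr07}---followed by reduction modulo $\mathrm{GL}_2(\Z)$ via a normal form, with cross-validation between the two outputs and against the known counts for $\idx=1,2$. The paper's implementation details are somewhat more specific (the first algorithm is organised around an explicit enumeration of LDP-sub-triangles and sub-parallelograms rather than a direct boundary search, and the second uses explicit height and width bounds for the special facet), but the overall strategy is the one you describe.
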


\begin{acknowledge}
The first author would like to express his gratitude to Colin Ingalls for several useful discussions whilst at the University of New Brunswick. He is currently supported by EPSRC grant EP/E000258/1. The second author is supported in part by the Austrian Research Funds FWF under grant number P18679-N16. The third author is a member of the Research Group Lattice Polytopes supported by Emmy Noether Fellowship HA 4383/1 of the DFG. We would like to thank Dimitrios Dais for initiating this research.
\end{acknowledge}

\section{The projection method}\label{sec:projection_method}
In this section we explain an elementary observation used in Section~\ref{sec:index_2_classify} to give a direct classification of all LDP-polygons of index two.

First we require a variant of the projection property of reflexive polytopes (see~\cite[Proposition~4.1]{Nill05}):

\begin{lemma}\label{lem:projection}
Let $Q$ be an LDP-polygon and let $F \in \F{Q}$ be a facet with $\locidx_F =\maxlocidx_Q$. Assume there exists a non-vertex lattice point $x \in \pos{F}$ with $\pro{F}{x} = 1$. If $m$ is a lattice point in $Q$ with $m \notin F$ then $m+x$ is also a lattice point in $Q$.
\end{lemma}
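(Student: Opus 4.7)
The plan is to verify that $m + x$ lies in $Q$; membership in the lattice $N$ is immediate since $m, x \in N$. By the usual facet description of $Q$, it suffices to check $\pro{F'}{m+x} \leq \locidx_{F'}$ for every facet $F' \in \F{Q}$.

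First I would dispatch the facet $F$ itself. Because $m$ is a lattice point of $Q$ with $m \notin F$, integrality of $\pro{F}{m}$ forces $\pro{F}{m} \leq \locidx_F - 1$, and adding the hypothesis $\pro{F}{x} = 1$ yields $\pro{F}{m+x} \leq \locidx_F$.

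The main work concerns facets $F' \neq F$, where I would strengthen the claim to $\pro{F'}{x} \leq 0$; this at once gives $\pro{F'}{m+x} \leq \pro{F'}{m} \leq \locidx_{F'}$. Since $x \in \pos{F}$, write $x = \alpha v_1 + \beta v_2$ with $v_1, v_2 \in \V{Q}$ the two vertices of $F$ and $\alpha, \beta \geq 0$. Using $\pro{F}{v_i} = \locidx_F$ together with $\pro{F}{x} = 1$ gives $\alpha + \beta = 1/\locidx_F$. Evaluating $\eta_{F'}$ and bounding each $\pro{F'}{v_i}$ by $\locidx_{F'} \leq \maxlocidx_Q = \locidx_F$ produces $\pro{F'}{x} \leq 1$, and this is an integer because $x$ is a lattice point. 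It therefore remains only to rule out the value $1$.

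The equality case is the main obstacle, and it is precisely where both the LDP hypothesis (primitivity of vertices) and the assumption that $x$ is not a vertex of $Q$ are used. If $\pro{F'}{x} = 1$ then every inequality above is tight, forcing $\locidx_{F'} = \locidx_F$ and $\pro{F'}{v_i} = \locidx_{F'}$ for every $v_i$ appearing with positive coefficient; equivalently, such $v_i$ lie in $F \cap F'$. Since the distinct facets $F$ and $F'$ share at most one vertex, at most one of $\alpha, \beta$ is nonzero, and $x$ must be a scalar multiple of the form $v_i/\locidx_F$ of a single primitive vertex. If $\locidx_F \geq 2$ this contradicts $x \in N$ by primitivity of $v_i$; if $\locidx_F = 1$ it gives $x = v_i \in \V{Q}$, contradicting the non-vertex hypothesis on $x$. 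Either way $\pro{F'}{x} = 1$ is impossible, so $\pro{F'}{x} \leq 0$ and the lemma follows.
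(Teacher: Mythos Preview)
Your proof is correct and follows essentially the same approach as the paper's: handle the facet $F$ directly, then for any other facet $F'$ reduce to showing $\pro{F'}{x}\leq 0$, first bounding it by $1$ via maximality of $\locidx_F$ and then eliminating equality using primitivity of the vertices together with the non-vertex hypothesis on $x$. The paper is slightly terser---it checks only the single facet $G$ with $m+x\in\pos{G}$ and uses the observation $\locidx_F x\in F\subseteq Q$ in place of your explicit decomposition $x=\alpha v_1+\beta v_2$---but the underlying argument is the same.
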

\begin{proof}
We may assume that $m+x\neq\0$. Hence there exists a facet $G\in\F{Q}$ with $m+x\in\pos{G}$. We have to show that $\pro{G}{m+x}\leq\locidx_G$. If $G=F$ then $\pro{F}{m+x}=\pro{F}{m}+1\leq\locidx_F$, since $\pro{F}{m}\leq\locidx_F-1$ by assumption. Therefore $G\neq F$ and it suffices to show that $\pro{G}{x}\leq\0$. 

Assume that $\pro{G}{x}\geq 1$. Since $\locidx_F x\in F\subseteq Q$ we see that $\locidx_F\leq\pro{G}{\locidx_F x}\leq\locidx_G$. But $\locidx_F$ was chosen to be maximal, so $\locidx_F =\locidx_G$. Hence $\locidx_F x\in F\cap G$, and so $\locidx_F x$ is a vertex (in particular, a primitive lattice point). This implies  that $\locidx_F=1$; a contradiction.
\end{proof}

\begin{figure}[htbp]
\centering
\includegraphics[scale=0.8]{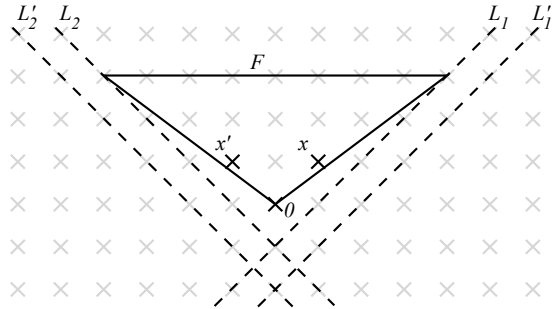}
\caption{Illustration of the proof of Proposition~\ref{prop:facetbound}.}
\label{fig:proof_of_facetbound}
\end{figure}

Here is our main application (the proof is illustrated in Figure~\ref{fig:proof_of_facetbound}):

\begin{proposition}\label{prop:facetbound}
Let $Q$ be an LDP-polygon with maximal local index $\maxlocidx_Q=k\geq 2$, and suppose that $F\in\F{Q}$ has local index $\locidx_F=k$. Then:
\[\card{F\cap N}\leq 2k^2.\]
\end{proposition}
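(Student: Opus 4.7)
The plan is to choose coordinates so that $\eta_F = (0,1)$ and $F \subset \{y = k\}$, with vertices $a = (a_1, k)$ and $b = (b_1, k)$ where $a_1 < b_1$; then $n := \card{F \cap N} = b_1 - a_1 + 1$ and $\gcd{a_1, k} = \gcd{b_1, k} = 1$ by primitivity. I set $L := \lceil a_1/k \rceil$, $U := \lfloor b_1/k \rfloor$ and
\[X := \pos{F} \cap N \cap \{y = 1\} = \{(c, 1) \st L \leq c \leq U\}.\]
Since $k \geq 2$ and $\gcd{a_1, k} = \gcd{b_1, k} = 1$, neither $a_1/k$ nor $b_1/k$ is an integer, so no element of $X$ lies on a bounding ray of $\pos F$; in particular every $x \in X$ is a non-vertex lattice point and Lemma~\ref{lem:projection} applies to it. The case $\card{X} \leq 1$ is immediate ($n \leq 2k - 1 \leq 2k^2$), so I assume $\card{X} \geq 2$ and write $x_L := (L, 1)$, $x_U := (U, 1) \in X$.

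The key technical step is to iterate Lemma~\ref{lem:projection}. For any lattice point $m = (m_1, m_2) \in Q$ with $m_2 < k$, iterating the shift $x_L$ exactly $k - m_2$ times yields the chain $m, m + x_L, m + 2x_L, \ldots, m + (k - m_2) x_L$ in $Q$ (each intermediate point has height strictly less than $k$, hence lies in $Q \setminus F$, so the lemma's hypothesis is preserved at each step); the terminal point has height $k$ and therefore lies on $F$, giving $a_1 \leq m_1 + (k - m_2) L$. An analogous iteration with $x_U$ gives $m_1 + (k - m_2) U \leq b_1$, and subtracting yields the master estimate
\[(k - m_2)(\card{X} - 1) \leq n - 1.\]

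I would then apply this estimate in two different ways. Taking $m = \0$ and iterating $k$ times with shifts chosen arbitrarily from $X$ at each step realises the full $k$-fold Minkowski sum $\{(c, k) \st kL \leq c \leq kU\}$ as a subset of $F \cap N$; together with the fact that $kL - a_1$ and $b_1 - kU$ both lie in $\{1, \ldots, k-1\}$ (since $k \nmid a_1, b_1$), this gives the bound $n \leq k\card{X} + k - 1$. On the other hand, because $\0 \in \intr{Q}$ the minimum $y$-coordinate of $Q$ is negative and is attained at some (lattice) vertex $v = (v_1, v_2)$, so $v_2 \leq -1$; applying the master estimate at $m = v$ then yields $(k+1)(\card{X} - 1) \leq n - 1$, i.e.\ $\card{X} \leq (n + k)/(k + 1)$. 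Feeding this into the previous bound gives $n \leq k(n + k)/(k + 1) + k - 1$, which simplifies to $n \leq 2k^2 - 1 < 2k^2$, as required. The main obstacle is recognising that Lemma~\ref{lem:projection}, which a priori only produces more lattice points inside $Q$, simultaneously provides both a lower bound on $n$ via the Minkowski-sum filling of $F$ and, through the vertex at negative height forced by $\0 \in \intr{Q}$, a matching upper bound on $\card{X}$; these two combine into a quadratic inequality in $n$ whose resolution gives exactly the required estimate.
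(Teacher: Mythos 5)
Your proof is correct and follows essentially the same route as the paper's: your $x_L$ and $x_U$ are the paper's $x$ and $x'$, your iterated application of Lemma~\ref{lem:projection} (the ``master estimate'') is precisely the paper's containment of $Q$ in a triangle whose apex lies below height $0$, and your two closing inequalities combine exactly as the paper's do to force $\card{X}\leq 2k-1$ and hence $n\leq 2k^2-1$. The only cosmetic differences are that you argue directly rather than by contradiction and evaluate the estimate at a lowest lattice vertex rather than at the apex of the bounding triangle; note also that the Minkowski-sum observation is superfluous, since $a_1\leq kL$ and $kU\leq b_1$ already follow from the definitions of $L$ and $U$.
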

\begin{proof}
Assume that $b:=\card{F\cap N}-1\geq 2k^2$, so $F/k$ has lattice length $b/k\geq 2k$. By a unimodular transformation we may assume that there exists $x:=(0,1)\in N\cap F/k$ and $x':=(g-1,1)\in N\cap F/k$, for some $g\in\Z_{\geq 1}$, 
such that $(-1,1)\notin F/k$ and $(g,1)\notin F/k$. This implies that $g\geq\lfloor b/k\rfloor\geq 2 k$.

Let the vertices of $F$ be $(f,k)$ and $(e,k)$, where $f\leq 0\leq e$. We know that $b=e-f$. Let $L_1$ be the line through $(f,k)$ with direction vector $-x$ and let $L_2$ be the line through $(e,k)$ with direction vector $-x'$. These intersect at the point $p:=(f,k-b/(g-1))$. Lemma~\ref{lem:projection} applied to $x$ and $x'$ yields that $Q$ is contained in the triangle $\sconv{(f,k),(e,k),p}$.

Since $(-k,k)\notin F$ and $(kg,k)\notin F$ by assumption, $-k+1\leq f$ and $e\leq kg-1$. Let $L'_1$ and $L'_2$ be the lines through $(-k+1,k)$ with direction vector $-x$, and through $(kg-1,k)$ with direction vector $-x'$, respectively. They intersect in the point $p':=(-k+1,k-(kg+k-2)/(g-1))$. $Q$ must be contained in the triangle $\sconv{(-k+1,k),(kg-1,k),p'}$.

This implies that $k-(kg+k-2)/(g-1)\leq -1$, since $\0$ is in the interior of $Q$, yielding that $kg-k-kg-k+2\leq -g+1$. Thus $g+1\leq 2k$; a contradiction.
\end{proof}

\section{The classification of LDP-polygons of index two}\label{sec:index_2_classify}
Using the results of the previous section we derive the following:

\begin{theorem}\label{thm:index_2_classify}
There are precisely thirty LDP-polygons of index two, up to unimodular equivalence. 
\end{theorem}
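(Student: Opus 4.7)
The plan is to fix a facet $F\in\F{Q}$ with $\locidx_F=\maxlocidx_Q=2$, which must exist because $\idx_Q=2$, and then use the projection method of Section~\ref{sec:projection_method} to bound $Q$ in terms of $F$. After a unimodular transformation I may assume $\eta_F=(0,1)$, so that $F$ lies on the line $y=2$ with primitive vertices $(a,2)$ and $(b,2)$. Primitivity forces $a$ and $b$ odd, and $\0\in\intr{Q}$ forces $a\leq-1\leq 1\leq b$. Proposition~\ref{prop:facetbound} gives $b-a=\card{F\cap N}-1\leq 7$; since $b-a$ is even this leaves $b-a\in\{2,4,6\}$. Up to the reflection $(x,y)\mapsto(-x,y)$ (which stabilises the standard form of $F$) there are only four admissible facet types, namely
\[
(a,b)\in\bigl\{(-1,1),\,(-1,3),\,(-1,5),\,(-3,3)\bigr\}.
\]

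For each such $F$ I would examine the lattice points $x=(s/2,1)\in\pos{F}\cap\{y=1\}$, which exist precisely for the even integers $s\in[a,b]$, and apply Lemma~\ref{lem:projection}. When $b-a\in\{4,6\}$ there are at least two such $x$, and picking the extreme ones and mimicking the argument in the proof of Proposition~\ref{prop:facetbound} traps $Q$ inside the bounded triangle whose sides pass through $(a,2)$ and $(b,2)$ in directions $-x$ and $-x'$; only finitely many lattice polygons $Q$ fit. In the remaining case $(a,b)=(-1,1)$ the unique such point is $x=(0,1)$, and Lemma~\ref{lem:projection} only yields the translation invariance $m+x\in Q\cap N$ for each $m\in Q\cap N$ with $m\notin F$. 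This must be combined with the volume bound $\Vol{Q}\leq 64$ from Theorem~\ref{thm:summary_bounds} to obtain a finite search, and the subcase where $(0,1)$ itself happens to be a vertex of $Q$ is handled separately using primitivity of the adjacent vertices.

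Within each bounded region I would enumerate the convex lattice polygons containing $F$ as a facet, retain those that are LDP (primitive vertices, $\0\in\intr{Q}$) and of index exactly~$2$ (i.e.\ every other facet has local index in $\{1,2\}$), and finally quotient by the residual $\mathrm{GL}_2(\Z)$-stabiliser of the standardisation of $F$ together with the overcount coming from polygons possessing several facets of local index~$2$. The main technical obstacle is the case $(a,b)=(-1,1)$: projection gives its weakest restriction there, so one must lean on the global volume and boundary bounds of Theorem~\ref{thm:summary_bounds} and inspect candidates one by one to avoid missing or duplicating polygons. Summing the contributions of the four facet types is expected to yield exactly thirty equivalence classes, of which the seven triangular ones agree with \cite[Theorem~6.12]{Dais06}.
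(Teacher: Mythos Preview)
Your plan is essentially the paper's: fix a facet $F$ with $\locidx_F=2$, bound $\card{F\cap N}$ via Proposition~\ref{prop:facetbound}, and trap $Q$ using Lemma~\ref{lem:projection}. The paper streamlines the execution by shearing one vertex of $F$ to $(1,2)$ (so your case $(-3,3)$ merges with $(-1,5)$, leaving three cases $b\in\{-1,-3,-5\}$), by choosing $F$ with $\card{F\cap N}$ \emph{maximal} among local-index-$2$ facets, and---crucially for the short case $b=-1$---by splitting on whether $-x=(0,-1)$ lies outside $Q$, on $\partial Q$, or in $\intr{Q}$, which pins down the bottom of $Q$ directly without recourse to the volume bound (incidentally, $(0,1)\in\intr{\sconv{\0,(-1,2),(1,2)}}\subset\intr{Q}$, so it is never a vertex and that subcase is vacuous).
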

\begin{proof}
Let $Q$ be a LDP-polyon of index two. Let $F$ be a facet of $Q$ with $\locidx_F=2$, chosen such that $\card{F \cap N}$ is maximal. By a unimodular transformation we may assume that $F=\sconv{(b,2),(1,2)}$, where $b\leq-1$ is an odd integer. By Proposition~\ref{prop:facetbound} we have that $b\geq-7$. We define $\locidx_F:=\{x\in\pos{F}\cap N\st\pro{F}{x}=1\}$, so $\card{I_F}=(1-b)/2$. There are three cases to consider:
\begin{enumerate}
\item\underline{$b=-1$}.\\
\noindent Let $I_F=\{x\}$. By Lemma~\ref{lem:projection} we may assume that $Q$ lies between the two dashed lines:
\begin{center}
\vspace{0.3cm}
\includegraphics[scale=0.8]{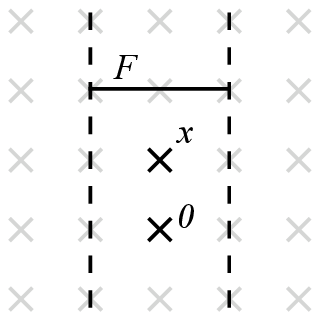}
\end{center}
There are three possibilities:
\begin{enumerate}
\item $-x \notin P$:
\begin{center}
\vspace{0.3cm}
\includegraphics[scale=0.7]{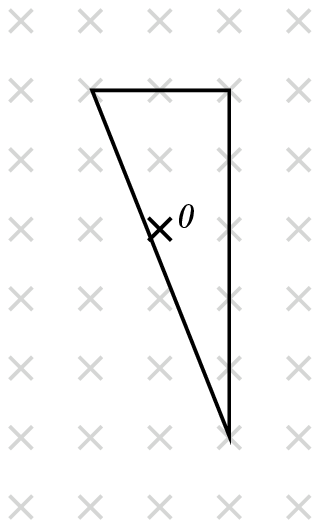}\hspace{0.3cm}
\includegraphics[scale=0.7]{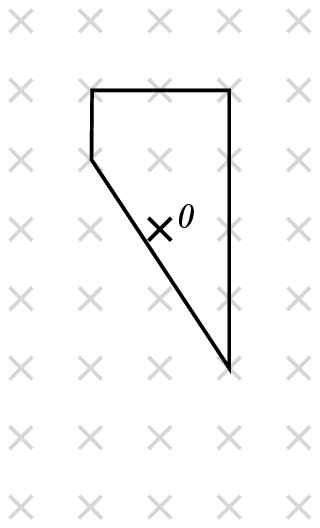}\hspace{0.3cm}
\includegraphics[scale=0.7]{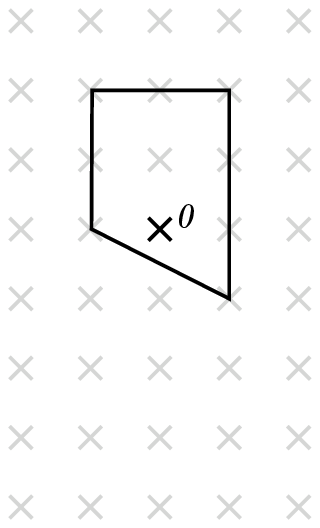}
\end{center}

\item $-x \in \rand P$:
\begin{center}
\vspace{0.3cm}
\includegraphics[scale=0.7]{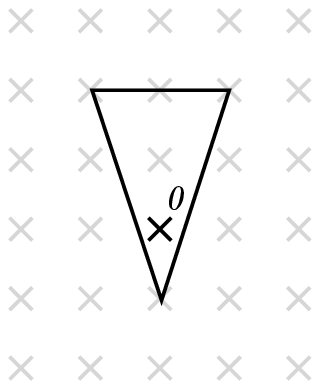}\hspace{0.3cm}
\includegraphics[scale=0.7]{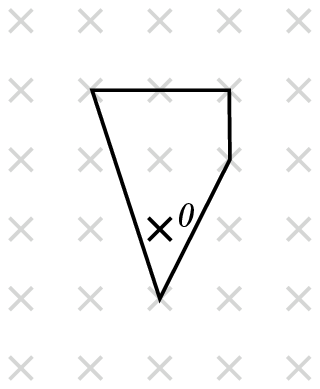}\hspace{0.3cm}
\includegraphics[scale=0.7]{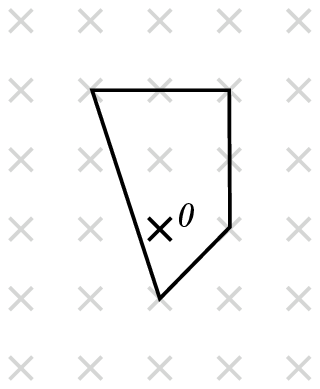}\hspace{0.3cm}
\includegraphics[scale=0.7]{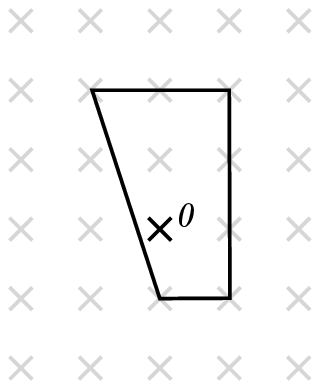}
\end{center}
\begin{center}
\vspace{0.3cm}
\includegraphics[scale=0.7]{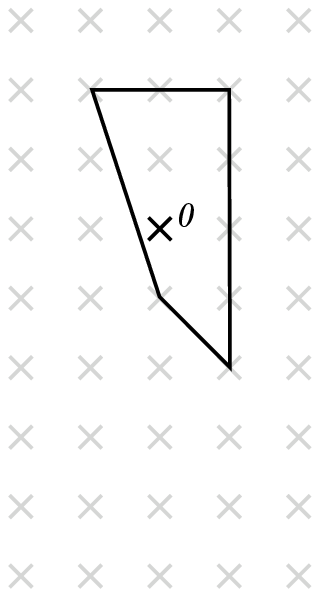}\hspace{0.3cm}
\includegraphics[scale=0.7]{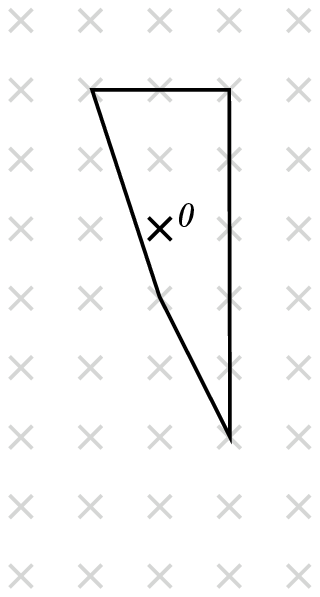}\hspace{0.3cm}
\includegraphics[scale=0.7]{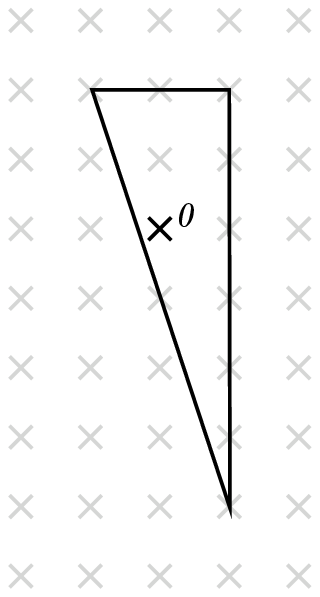}\hspace{0.3cm}
\includegraphics[scale=0.7]{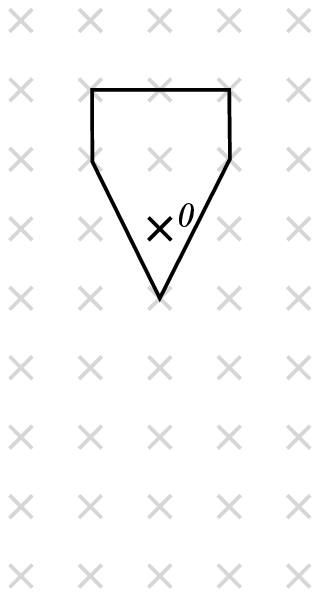}
\end{center}
\begin{center}
\vspace{0.3cm}
\includegraphics[scale=0.7]{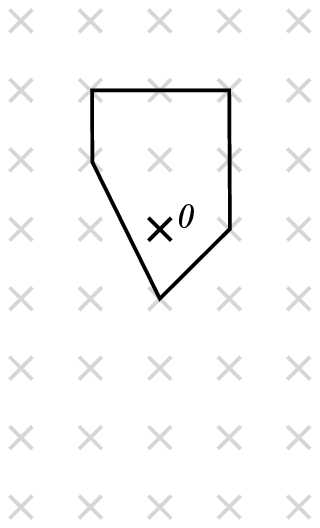}\hspace{0.3cm}
\includegraphics[scale=0.7]{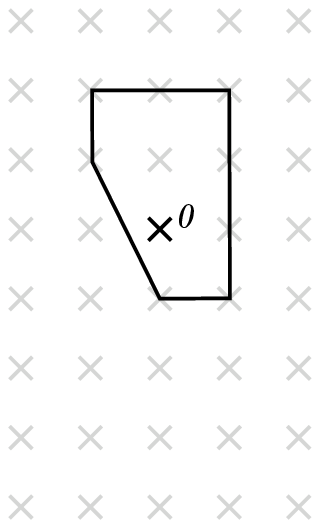}\hspace{0.3cm}
\includegraphics[scale=0.7]{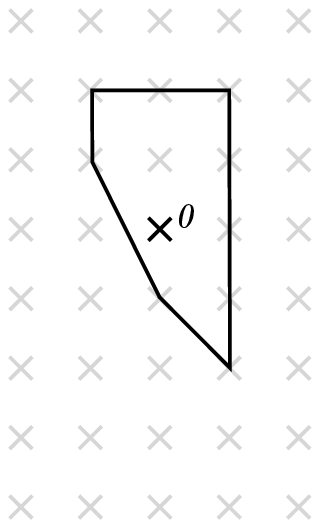}\hspace{0.3cm}
\includegraphics[scale=0.7]{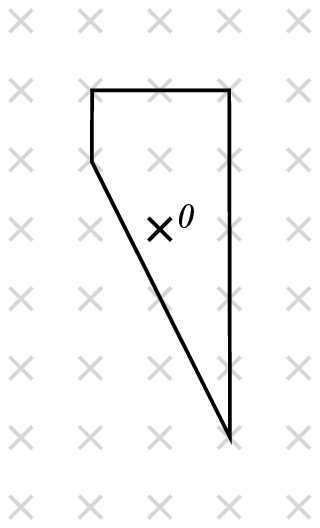}
\end{center}
\begin{center}
\vspace{0.3cm}
\includegraphics[scale=0.7]{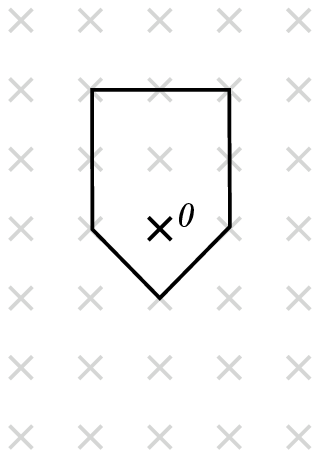}\hspace{0.3cm}
\includegraphics[scale=0.7]{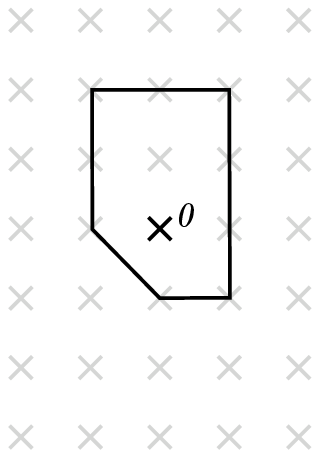}\hspace{0.3cm}
\includegraphics[scale=0.7]{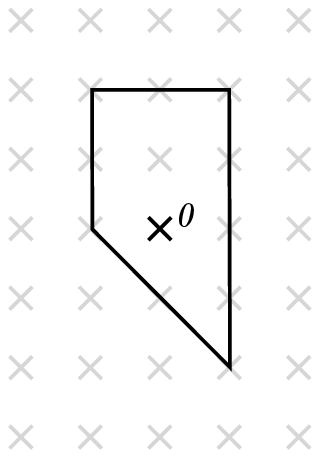}\hspace{0.3cm}
\includegraphics[scale=0.7]{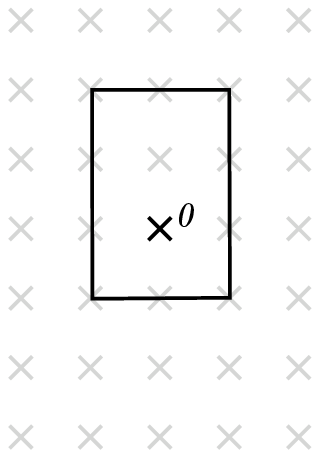}
\end{center}

\item $-x \in \intr{Q}$; i.e. $-x\in I_G$ for some facet $G$, so $-2x \in G$:
\begin{center}
\vspace{0.3cm}
\includegraphics[scale=0.7]{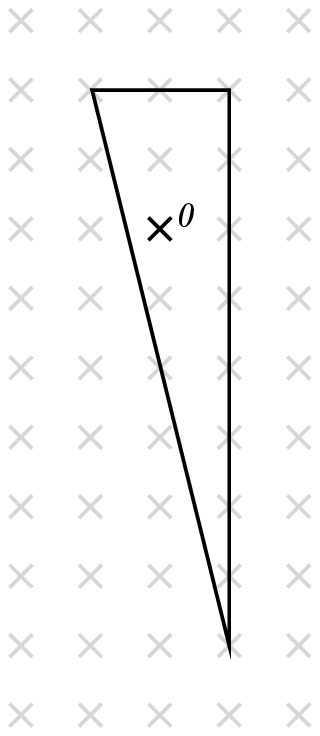}\hspace{0.3cm}
\includegraphics[scale=0.7]{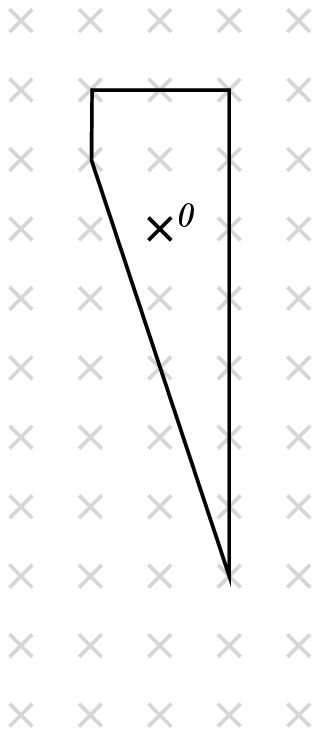}\hspace{0.3cm}
\includegraphics[scale=0.7]{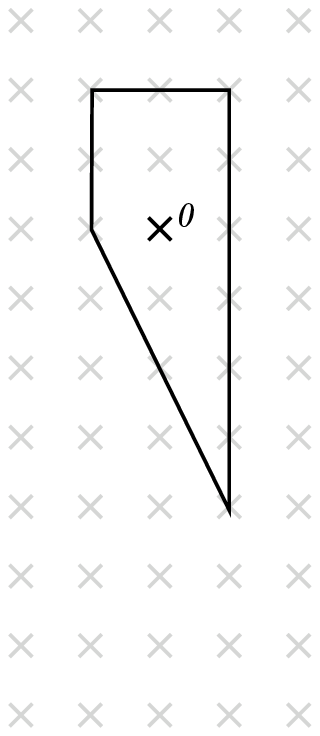}\hspace{0.3cm}
\includegraphics[scale=0.7]{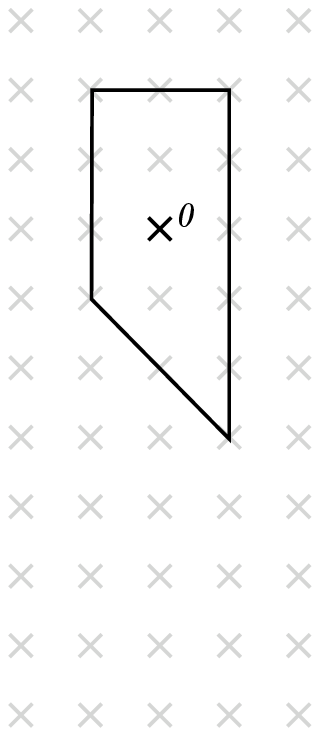}\hspace{0.3cm}
\includegraphics[scale=0.7]{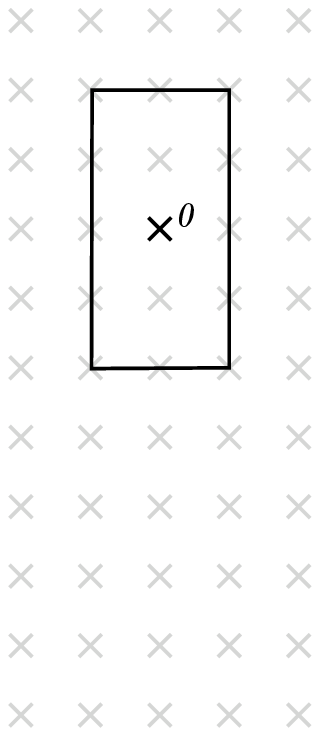}
\end{center}
\end{enumerate}
Hence we obtain $24$ LDP-polygons of index two, no pair of which are unimodularly equivalent.

\item\underline{$b=-3$}.\\
\noindent Let $I_F = \{x',x\}$, where $x$ is chosen to be to the right of $x'$. Lemma~\ref{lem:projection} implies that $Q$ lies in the region defined by the dashed lines:
\begin{center}\vspace{0.3cm}
\vspace{0.3cm}
\includegraphics[scale=0.8]{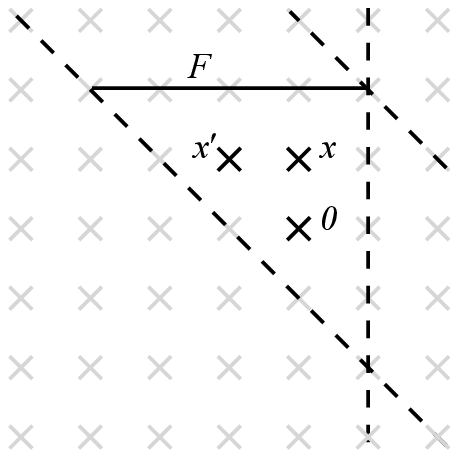}
\end{center}
Now a simple enumeration yields the following list:
\begin{center}
\vspace{0.3cm}
\includegraphics[scale=0.7]{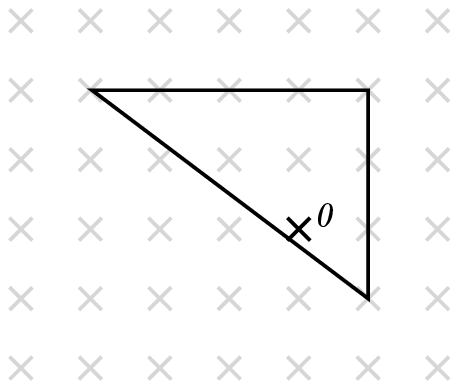}\hspace{0.3cm}
\includegraphics[scale=0.7]{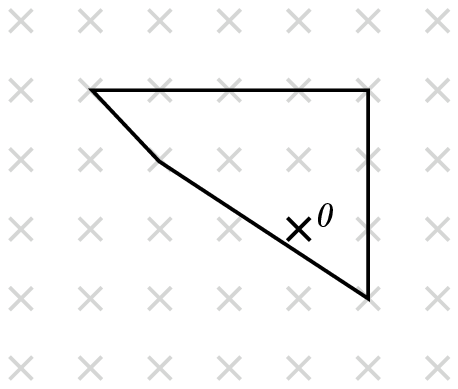}\hspace{0.3cm}
\includegraphics[scale=0.7]{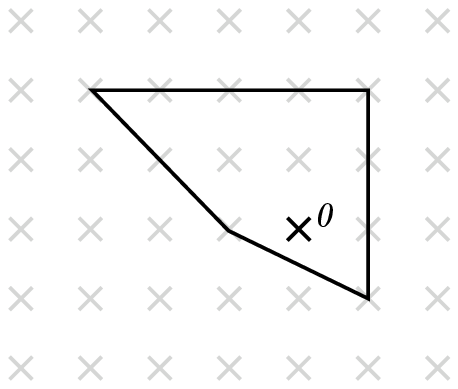}\hspace{0.3cm}
\includegraphics[scale=0.7]{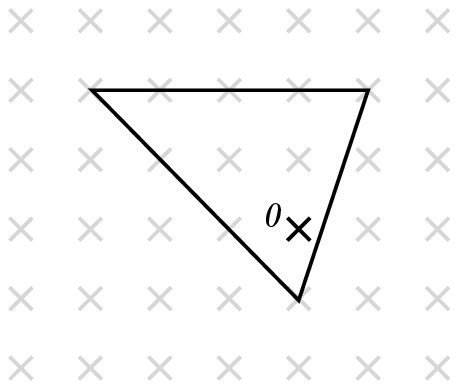}
\end{center}
\begin{center}
\vspace{0.3cm}
\includegraphics[scale=0.7]{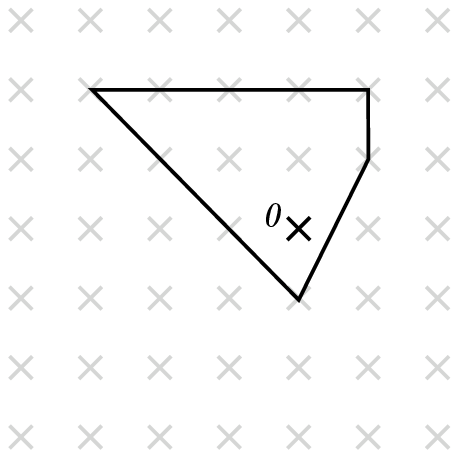}\hspace{0.3cm}
\includegraphics[scale=0.7]{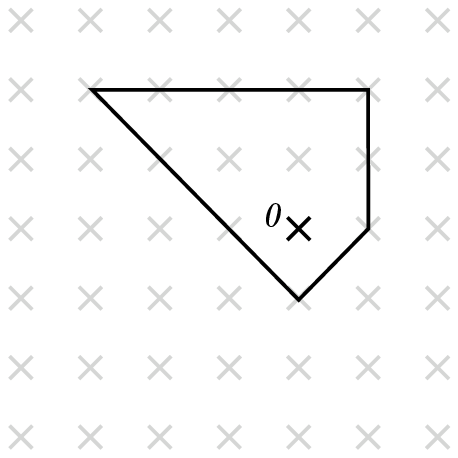}\hspace{0.3cm}
\includegraphics[scale=0.7]{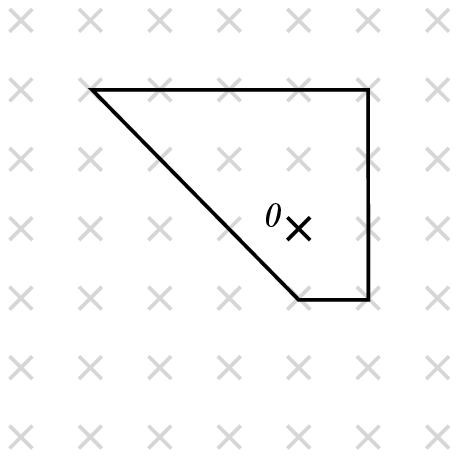}\hspace{0.3cm}
\includegraphics[scale=0.7]{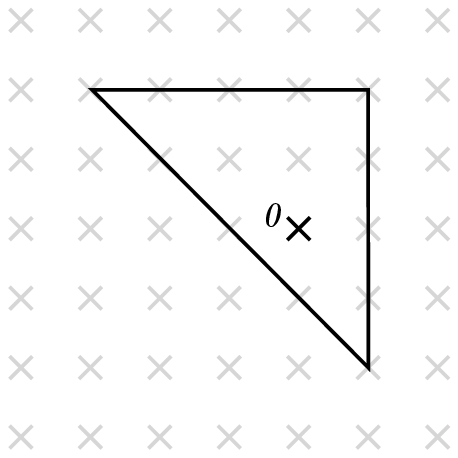}
\end{center}
Of these, the first and fourth, second and fifth, and third and sixth are unimodularly equivalent. Hence we obtain five unimodular equivalence classes.

\item\underline{$b=-5$}.\\
\noindent Let $x,x'\in I_F$, where $x$ is chosen to be the right-most lattice point in $Q$, and $x'$ to be the left-most. By Lemma~\ref{lem:projection} we have that $Q$ lies in the region enclosed by the four dashed lines:
\begin{center}\vspace{0.3cm}
\includegraphics[scale=0.8]{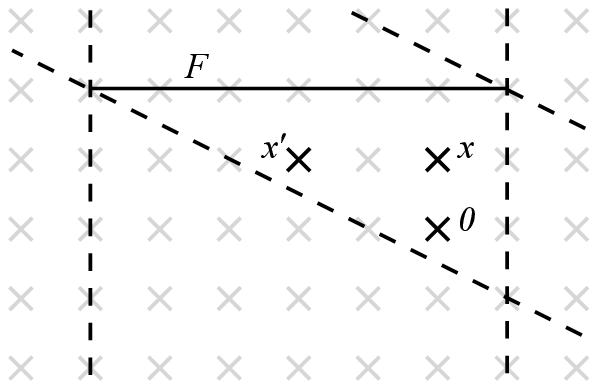}
\end{center}
This yields the following LDP-polygon, which is unique up to unimodular equivalence:
\begin{center}
\vspace{0.3cm}
\includegraphics[scale=0.7]{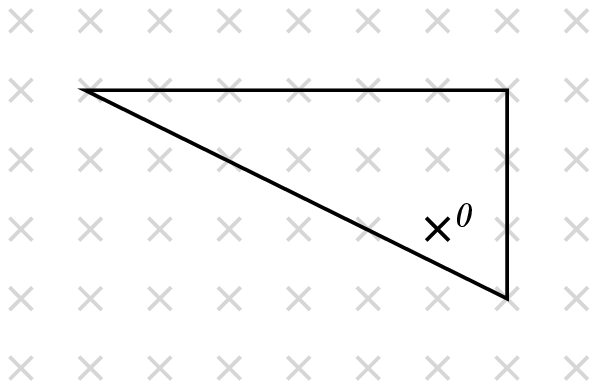}
\end{center}
\end{enumerate}
\end{proof}

\section{Bounding the volume of IP-polygons}\label{sec:volume_bound}
The main goal of this section is to present an upper bound on the volume of an LDP-polygon $Q$ of fixed maximal local index $\maxlocidx_Q = k$. In fact it is relatively easy to derive the following weak bound on the volume of an LDP-polygon in terms of the index $\idx_Q = \idx$:
\begin{equation}\label{eq:easy_bound}
\Vol{Q}\leq 8\idx^4-6\idx^3+5\idx^2.
\end{equation}
This can be seen as follows:~\cite[Theorem~1.1]{DN08} gives the quadratic bound $h\leq 8\idx^2-6\idx+5$ on the number $h$ of elements in the union of the Hilbert bases of the cones spanned by the faces of $Q$. These lattice points form a non-convex polygon $P$, where each facet has integral distance one from the origin. Therefore the volume of $P$ equals $h$. By equation~(\ref{eq:hierarchy}) $Q/\idx$ contains no non-zero interior lattice points, so $Q/\idx$ is contained in $P$. This yields~(\ref{eq:easy_bound}).

In the remainder of this section we shall generalise and improve equation~(\ref{eq:easy_bound}) by bounding the number of boundary lattice points of an IP-polygon $Q$. This suffices by the following inequality, which stems directly from the definition of the maximal local index:
\begin{equation}\label{eq:inequi}
\Vol{Q}\leq\maxlocidx_Q\card{\rand Q\cap N}.
\end{equation}
First we give a sharp upper bound on the number of lattice points in facets of IP-polygons.

\begin{proposition}\label{prop:first_IP_bound}
Let $Q$ be an IP-polygon of order $\order_Q = I$. Let $F\in\F{Q}$ be a facet with local index $\locidx_F$. Then:
\[\card{F\cap N}\leq 2I(\locidx_F+1)+1,\]
where equality implies that $\locidx_F \leq I$ and $Q$ is unimodularly equivalent to the triangle with vertices $(-I (\locidx_F+1),\locidx_F)$, $(I(\locidx_F+1),\locidx_F)$, and $(0,-1)$.
\end{proposition}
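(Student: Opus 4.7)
The plan is to put the facet $F$ horizontally and exploit concavity of the horizontal-slice width function, together with a single width constraint at $y=0$ extracted from the order hypothesis. After a unimodular transformation I assume $F\subset\{y=\locidx_F\}$ with vertices $(a,\locidx_F)$ and $(c,\locidx_F)$, so that $\card{F\cap N}=b+1$ for $b:=c-a$, and the goal is to prove $b\leq 2I(\locidx_F+1)$. The only consequence of $\order_Q=I$ I need is that $Im\notin\intr{Q}$ for every nonzero $m\in N$. Applying this to $m=(\pm1,0)$, and using $\0\in\intr{Q}$, the horizontal slice $Q\cap\{y=0\}$ is an interval $[x_L,x_R]$ with $x_L<0<x_R$, and the preceding assertion rules out $x_R>I$ or $x_L<-I$; hence $w(0):=x_R-x_L\leq 2I$.

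Let $w(y)$ denote the length of the horizontal slice of $Q$ at height $y$. By convexity of $Q$ this is a concave function on $[y_{\min},\locidx_F]$, with $w(\locidx_F)=b$. Because $Q$ is a lattice polygon, $y_{\min}$ is attained at a lattice vertex, so $h:=-y_{\min}$ is a positive integer, and $w(-h)\geq 0$ trivially. Writing $y=0$ as the convex combination $\frac{\locidx_F}{\locidx_F+h}(-h)+\frac{h}{\locidx_F+h}\locidx_F$, concavity yields $w(0)\geq \frac{hb}{\locidx_F+h}$. Combined with $w(0)\leq 2I$, this gives
\[
b \,\leq\, 2I\!\left(\frac{\locidx_F}{h}+1\right),
\]
which is maximized over $h\in\Z_{\geq 1}$ at $h=1$ and produces the required bound $b\leq 2I(\locidx_F+1)$.

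For the equality case I trace each inequality backwards. Equality $b=2I(\locidx_F+1)$ forces $h=1$, $w(-1)=0$, $w(0)=2I$, and tightness of the concavity bound at the interior point $y=0$. A concave function pinned to its linear interpolation at an interior point must coincide with it on the whole interval, so $w$ is affine on $[-1,\locidx_F]$. Since the left edge-function $x_L(y)$ is convex, the right edge-function $x_R(y)$ is concave, and their difference is now affine, both $x_L$ and $x_R$ must be affine on $[-1,\locidx_F]$. So $Q$ has no vertex with $y\in(-1,\locidx_F)$ and is the triangle $\sconv{(a,\locidx_F),(c,\locidx_F),(v,-1)}$ for some $v\in\Z$. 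Linear interpolation along the two slanted edges gives $x_L(0)=(v\locidx_F+a)/(\locidx_F+1)=-I$ and $x_R(0)=(v\locidx_F+c)/(\locidx_F+1)=I$, i.e. $a+v\locidx_F=-I(\locidx_F+1)$ and $c+v\locidx_F=I(\locidx_F+1)$. The unimodular shear $(x,y)\mapsto(x+vy,y)$ sends $(v,-1)$ to $(0,-1)$ and carries the top vertices to $(-I(\locidx_F+1),\locidx_F)$ and $(I(\locidx_F+1),\locidx_F)$, giving the claimed standard form. Finally $\locidx_F\leq I$: otherwise in the standard form the slice at $y=I\in(y_{\min},\locidx_F)$ is the interval $[-I(I+1),I(I+1)]$, whose interior contains $(I,I)=I(1,1)$, forcing $(1,1)\in\intr{(Q/I)}$ and contradicting $\order_Q=I$.

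The step I expect to require the most care is the passage from tight equality in the concavity inequality at a single interior point to $w$ being affine on the whole interval, and thence to $Q$ being a triangle with a unique bottom lattice vertex; this is a standard convex-geometry observation but needs an explicit justification so that one can be sure no vertices of $Q$ lurk at intermediate heights.
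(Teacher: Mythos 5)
Your proof is correct and is essentially the paper's argument in different clothing: both pin the width of the slice $Q\cap\{y=0\}$ to at most $2I$ via $(\pm I,0)\notin\intr{Q}$, then use convexity together with the fact that $Q$ reaches height $\leq -1$ (you via concavity of the width function and $h\geq 1$; the paper via intersecting the lines through $(a,\locidx_F),(-I,0)$ and $(b,\locidx_F),(I,0)$ and forcing their apex to height $\leq-1$), and both read off the equality case the same way. Your justification of the equality case --- in particular that $Q$ must be a triangle with a unique bottom vertex --- is more explicit than the paper's one-line assertion, but the conclusion is identical.
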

\begin{proof}
We may assume by an unimodular transformation that $F$ is the convex hull of the vertices $(a,\locidx_F), (b,\locidx_F))$ with $a \leq b$. Let $c:=b-a=\card{F\cap N}-1$ and assume that $c\geq 2I(\locidx_F+1)$. Then $(\pm I,0)\notin\intr{Q}$.

The line through $(a,\locidx_F)$ and $(-I,0)$ intersects the line through $(b,\locidx_F)$ and $(I,0)$ at a point $x$ with second coordinate $x_2=2I\locidx_F/(2I-c)$. Since $Q$ is contained in the triangle with vertices $(a,\locidx_F)$, $(b,\locidx_F)$, and $x$, and since $Q$ contains the origin in its interior, we obtain $2I\locidx_F/(2I-c)\leq-1$. This yields $c\leq 2I(\locidx_F+1)$, and hence equality. Therefore $Q$ has the vertices $(-I(\locidx_F+1),\locidx_F)$, $(I(\locidx_F+1),\locidx_F),$ and $(0,-1)$. Since $(\pm I,0)$ and $(0,\locidx_F)$ are boundary lattice points of $Q$, we see that 
$\order_Q=\max{I,\locidx_F}$. Hence, $\locidx_F\leq I$.
\end{proof}

\begin{figure}[htbp]
\centering
\includegraphics[scale=0.8]{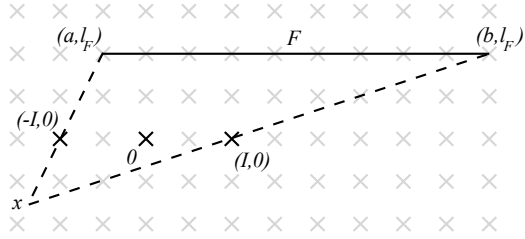}
\caption{Illustration of the proof of Proposition~\ref{prop:first_IP_bound}.}
\label{fig:proof_of_first_IP_bound}
\end{figure}



Let us consider the case of equality in Proposition~\ref{prop:first_IP_bound}.

\begin{example}\label{eg:triangles}
Let $Q=Q(a,b)$ be the triangle with the facet $F$ described in Proposition~\ref{prop:first_IP_bound} such that $a=\locidx_F$ and $b=I \geq a$. The local indices of the facets are $a$, $b$, and $b$, so $\idx_Q=\lcm{a,b}$. We compute $\Vol{Q}=2b(a+1)^2$ and $\card{\rand Q\cap N}=2(a+1)(b+1)$. 

Suppose that $a=b=\idx$, so that $\idx_Q=\idx$. In the notation of~\cite{Pik01}, $Q(\idx,\idx)$ equals the translated triangle $S_{(2,1,\idx)}$; this is conjectured to have the maximal volume of all IP-polygons of order $\idx$.
This yields a family of IP-polygons with increasing indices $\idx$, whose number of boundary lattice points grow 
as $O(\idx^2)$ and their volume grows as $O(\idx^3)$. Also note that $Q(\idx,\idx)/\idx$ (for $\idx \geq 2$) yields an unbounded family of rational triangles having only one interior lattice point and linearly increasing number of lattice points and volume.


Note that $Q$ is an LDP-polygon if and only if $\gcd{a,b}=1$. In this case, $\idx_Q=ab$. By choosing a suitable family of increasing coprime integers $a$ and $b$ we obtain a family of LDP-polygons with increasing indices $\idx$ whose number of boundary lattice points grow as $O(\idx)$ and their volume grows as $O(\idx^{3/2})$.
\end{example}

Since an LDP-polygon has primitive vertices, we obtain the following:

\begin{corollary}\label{cor:first_LDP_bound}
Let $Q$ be an LDP-polygon with maximal local index $\maxlocidx_Q\geq 2$. Then for any $F\in\F{Q}$:
\[\card{F\cap N}\leq2\maxlocidx_Q(\maxlocidx_Q+1).\]
\end{corollary}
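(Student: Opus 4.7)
The plan is to derive the corollary directly from Proposition~\ref{prop:first_IP_bound} combined with the hierarchy~(\ref{eq:hierarchy}) and the primitivity of vertices, with a short case analysis used to remove the ``$+1$'' in the raw bound.

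Write $k:=\maxlocidx_Q$ and $I:=\order_Q$. Since $Q$ is in particular an IP-polygon, Proposition~\ref{prop:first_IP_bound} gives
\[\card{F\cap N}\leq 2I(\locidx_F+1)+1.\]
By~(\ref{eq:hierarchy}) we have $I\leq k$, and by definition $\locidx_F\leq k$, so at worst $\card{F\cap N}\leq 2k(k+1)+1$. The task reduces to shaving off that trailing~$1$.

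Split into cases. If $I<k$, then $I\leq k-1$ and the bound collapses to $2(k-1)(k+1)+1=2k^2-1$, which is strictly less than $2k(k+1)$. If $\locidx_F<k$, then $\locidx_F\leq k-1$ and the bound becomes $2k\cdot k+1=2k^2+1$, which is at most $2k(k+1)$ since $k\geq 1$. The remaining situation is $I=\locidx_F=k$, in which the plain bound reads $\card{F\cap N}\leq 2k(k+1)+1$. Here I would invoke the equality clause of Proposition~\ref{prop:first_IP_bound}: if the bound were attained, then $Q$ would be unimodularly equivalent to the triangle with vertices $(-k(k+1),k)$, $(k(k+1),k)$, and $(0,-1)$. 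But $\gcd(k(k+1),k)=k\geq 2$, so $(\pm k(k+1),k)$ is not a primitive lattice point, contradicting the hypothesis that $Q$ is an LDP-polygon. Hence strict inequality holds and $\card{F\cap N}\leq 2k(k+1)$.

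The main substantive step is the final case, where one must use the equality characterization of Proposition~\ref{prop:first_IP_bound} together with primitivity of the vertices; the rest is a straightforward comparison of integers. I do not foresee any obstacle beyond carefully invoking the hypothesis $k\geq 2$ in order to force the relevant vertex to be non-primitive.
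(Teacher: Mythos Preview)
Your argument is correct and matches the paper's intended approach: the paper offers no explicit proof beyond the line ``Since an LDP-polygon has primitive vertices, we obtain the following'', and your case analysis is precisely the natural way to fill that in using Proposition~\ref{prop:first_IP_bound}, the hierarchy~(\ref{eq:hierarchy}), and the equality clause to exclude the extremal triangle via non-primitivity of $(\pm k(k+1),k)$ when $k\geq 2$.
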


We now present the main result of this section. The proof implicitly uses the notion of a \emph{special facet}, introduced in~\cite{Obr07}.

\begin{theorem}\label{thm:bound_boundary}
Let $Q$ be an IP-polygon of maximal local index $\maxlocidx_Q=k\geq 2$. Then:
\[\card{\rand Q\cap N}\leq 4k(k+2).\]
If $Q$ is an LDP-polygon, then:
\[\card{\rand Q\cap N}\leq 4k(k+2)-2.\]
If $Q$ is an LDP-polygon and $k\geq 3$ is prime, then:
\[\card{\rand Q\cap N}\leq 4k(k+1)-2.\]
\end{theorem}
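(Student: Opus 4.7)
My plan is to pick a facet $F\in\F{Q}$ with $\locidx_F=k$ and, after a unimodular transformation, assume $\eta_F=(0,1)$ and $F\subseteq\{y=k\}$, so $Q\subseteq\{y\le k\}$. Proposition~\ref{prop:first_IP_bound} combined with $\order_Q\le k$ from~\eqref{eq:hierarchy} then gives $\card{F\cap N}\le 2k(k+1)+1$. I would bound the remaining boundary lattice points by horizontal slicing: for each integer $j$ with $-h\le j\le k-1$ (where $-h$ is the lowest $y$-coordinate attained by $Q$), the line $\{y=j\}$ meets $\rand Q$ in at most two lattice points unless $Q$ has a horizontal edge at height $j$. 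Since the only horizontal edges can be $F$ itself and, at most, a bottom facet $F'$ at $y=-h$, this yields $\card{\rand Q\cap N}\le\card{F\cap N}+2(k+h)+\card{F'\cap N}$, with the last term omitted when there is no horizontal bottom edge.

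The key remaining step is to control $\card{F'\cap N}$ and the depth $h$. If $Q$ has a horizontal bottom edge $F'$, then $\locidx_{F'}=h\le\maxlocidx_Q=k$ automatically, and Proposition~\ref{prop:first_IP_bound} gives $\card{F'\cap N}\le 2k(h+1)+1$; substituting and using $h\le k$ yields the bound $4k(k+2)$. Otherwise $Q$ has a unique lowest vertex $v$; applying Proposition~\ref{prop:first_IP_bound} to the two non-horizontal edges adjacent to $v$, together with the primitivity of their outer normals, produces a joint estimate trading off $h$ against $\card{F\cap N}$, since a near-extremal top facet forces $Q$ to be close to the specific triangle of Proposition~\ref{prop:first_IP_bound} (for which $h=1$).

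The LDP refinement $\card{\rand Q\cap N}\le 4k(k+2)-2$ comes from substituting Proposition~\ref{prop:facetbound} in place of Proposition~\ref{prop:first_IP_bound}, giving the sharper top facet bound $\card{F\cap N}\le 2k^2$, together with the primitivity of the vertices of $F$, which saves two further lattice points. For the prime case $k\ge 3$, the primality of $k$ combined with vertex primitivity forces the endpoints of $F$ to have first coordinates coprime to $k$; this imposes congruence restrictions that tighten the projection-based bound of Proposition~\ref{prop:facetbound}, and hence delivers $\card{\rand Q\cap N}\le 4k(k+1)-2$.

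The main obstacle is the trade-off argument in the case without a horizontal bottom edge: one must show that increasing the depth $h$ beyond $k$ forces a simultaneous reduction in $\card{F\cap N}$ sufficient to keep the total below $4k(k+2)$. This delicate point will require a careful combined analysis of the outer normals of the edges at the lowest vertex, using the constraint $\locidx_G\le k$ and the primitivity of those normals.
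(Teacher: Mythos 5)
Your strategy---normalise a facet $F$ with $\locidx_F=k$ to lie at height $k$, bound $\card{F\cap N}$ by Proposition~\ref{prop:first_IP_bound}, and count the remaining boundary points by horizontal slices---is genuinely different from the paper's, and it does close in the case where $Q$ has a horizontal bottom facet $F'$ at height $-h$: there $h=\locidx_{F'}\leq k$, and the arithmetic gives exactly $2k^2+2kh+6k+2h\leq 4k^2+8k$, with no slack to spare. But the case you yourself flag as ``the main obstacle'' is a genuine gap, not a deferrable technicality. When the lowest point of $Q$ is a single vertex, the depth $h$ is not the local index of any facet and can be of order $k^2$: in the triangle $Q(k,k)$ of Example~\ref{eg:triangles}, the facet $\sconv{(k(k+1),k),(0,-1)}$ has local index $k$, and the opposite vertex $(-k(k+1),k)$ lies at lattice distance $2k^2+k$ below it, so after normalising that facet to be your top facet you get $h=2k^2+k$ and the slice count contributes $2h\approx 4k^2$ on its own. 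Your total survives in that example only because $\card{F\cap N}=k+2$ happens to be tiny; in general you would need a quantitative trade-off of the shape $\card{F\cap N}+2h\leq 4k^2+O(k)$, and you supply no mechanism for it. Note that the containment triangle from the proof of Proposition~\ref{prop:first_IP_bound} only controls the depth when the lattice length $c$ of $F$ exceeds $2\order_Q$, and the resulting bound blows up as $c$ decreases to $2\order_Q$ (indeed $c=k+1<2\order_Q$ in the example above), so it cannot supply the trade-off by itself. The paper sidesteps this entirely: it sets $p:=\sum_{x\in\rand Q\cap N}x$, chooses $F$ with $p\in\pos{F}$ so that $\pro{F}{p}\geq 0$, bounds the total negative ``mass'' $-\sum\pro{F}{x}$ by $\locidx_F(\locidx_F-1)+\locidx_F\card{F\cap N}$, and converts that into a bound on the number of points at negative heights by a packing argument (at most two boundary points at each negative level except the deepest). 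No bound on the geometric depth $h$ is ever needed.

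A second problem is the prime case. You already invoke Proposition~\ref{prop:facetbound} to get $\card{F\cap N}\leq 2k^2$ in the general LDP case, since your $F$ has $\locidx_F=k=\maxlocidx_Q$; it is then unclear what further saving primality could buy in your framework, and ``congruence restrictions that tighten the projection-based bound'' is an assertion, not an argument. In the paper the saving enters elsewhere: the distinguished facet there is the one containing $p$, whose local index $\locidx_F$ need not equal $k$, and primality of $k$ is used to pin down $\locidx_F\in\{1,k\}$, in both of which cases $\card{F\cap N}\leq 2k^2$ is available (via Proposition~\ref{prop:facetbound} or Proposition~\ref{prop:first_IP_bound} respectively). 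Your sketch has no analogous pressure point, so the claimed bound $4k(k+1)-2$ is unsupported.
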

\begin{proof}
Let:
\[p:=\sum_{x\in\rand Q\cap N}x,\]
and let $F\in\F{Q}$ be such that $p\in\pos{F}$. Hence $\pro{F}{p}\geq 0$. Set $l:=\locidx_F$, $f := \card{F\cap N}$, 
$R:=\{x\in\rand Q\cap N\st\pro{F}{x}<0\}$, and $r:=\card{R}$. We have that: 
\[-\sum_{x\in R}\pro{F}{x}\leq fl+\sum_{j=0}^{l-1}2j=l(l-1)+f l.\]
Since $f\leq 2k(k+1)+1$ and $l\leq k$ we get:
\begin{equation}\label{eq:casa}
-\sum_{x\in R}\pro{F}{x}\leq 2k^3+3k^2.
\end{equation}

Let $w:=\max{-\pro{F}{x}\st x\in R}\geq 1$. The set of lattice points on the face of $Q$ defined by $-\eta_F$ is therefore 
given by $G:=\{x\in R\st-\pro{F}{x}=w\}$. Let $g:=\card{G}\geq 1$ and $s:=r-g\geq 0$. We distinguish the case when $s$ is even and when $s$ is odd.

First suppose that $s=2t$ is even. We obtain the lower bound:
\[g(t+1)+\sum_{j=1}^t 2j\leq-\sum_{x\in R}\pro{F}{x}.\]
By equation~(\ref{eq:casa}) we are required to solve the quadratic inequality $(t+1)(t+g)\leq 2k^3+3k^2$ for $t$. This yields:
\begin{equation}\label{eq:firstrrr}
r=s+g=2t+g\leq\sqrt{8k^3+12k^2+1+g(g-2)}-1.
\end{equation}
Since $g\leq 2k(k+1)+1$ we have that:
\[\card{\rand Q\cap N}\leq f+2l+r\leq 4k^2+8k.\]

Now suppose that $s=2t+1$ is odd. We use the lower bound:
\[t+1+g(t+2)+\sum_{j=1}^t 2j\leq-\sum_{x\in R}\pro{F}{x}.\]
Proceeding as before we get:
\[r\leq\sqrt{8k^3+12k^2+g(g-4)}-1.\]
Comparing with~(\ref{eq:firstrrr}) we see that this inequality may be neglected.

Finally, if $k \geq 3$ is a prime we may use the bound $\card{F\cap N}\leq 2k^2$. If $\locidx_F = k$ this follows from Proposition~\ref{prop:facetbound}, otherwise, since $k$ is prime, we have that $\locidx_F=1$. Proposition~\ref{prop:first_IP_bound} yields $\card{F\cap N}\leq 4k+1\leq 2k^2$.
\end{proof}

Applying equation~(\ref{eq:inequi}) gives the following corollary:

\begin{corollary}\label{cor:bound_volume}
Let $Q$ be an IP-polygon of maximal local index $\maxlocidx_Q=k\geq 2$. Then:
\[\Vol{Q}\leq 4k^3+8k^2.\]
If $Q$ is an LDP-polygon, then:
\[\Vol{Q}\leq 4k^3+8k^2-2k.\]
If $Q$ is an LDP-polygon and $k\geq 3$ is prime, then:
\[\Vol{Q}\leq 4k^3+4k^2-2k.\]
\end{corollary}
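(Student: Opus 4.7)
The plan is to combine the inequality $(\ref{eq:inequi})$, which reads $\Vol{Q}\leq\maxlocidx_Q\,\card{\rand Q\cap N}$, with the three boundary-lattice-point bounds established in Theorem~\ref{thm:bound_boundary}. Since $\maxlocidx_Q=k$ by hypothesis, each of the volume estimates is nothing more than $k$ times the corresponding estimate on $\card{\rand Q\cap N}$, and the numerical check is trivial.

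The only thing to verify is $(\ref{eq:inequi})$ itself, which is asserted but not explicitly proved in the excerpt. I would triangulate $Q$ by joining the origin to each facet. For a facet $F\in\F{Q}$ with vertices $v_1,v_2$, the lattice triangle $\sconv{\0,v_1,v_2}$ has normalised volume $\locidx_F\,(\card{F\cap N}-1)$: after a unimodular transformation we may place $F$ in the line $\{x_2=\locidx_F\}$ with $v_1=(a,\locidx_F)$ and $v_2=(b,\locidx_F)$, so that $\card{F\cap N}-1=b-a$ while $|\!\det(v_1,v_2)|=\locidx_F(b-a)$. Summing over facets and using $\locidx_F\leq\maxlocidx_Q=k$ yields $(\ref{eq:inequi})$.

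With $(\ref{eq:inequi})$ in hand, each case is a one-line computation. For an IP-polygon with $\maxlocidx_Q=k\geq 2$, Theorem~\ref{thm:bound_boundary} gives $\card{\rand Q\cap N}\leq 4k(k+2)$, so $\Vol{Q}\leq k\cdot 4k(k+2)=4k^3+8k^2$. If additionally $Q$ is an LDP-polygon, the sharper estimate $\card{\rand Q\cap N}\leq 4k(k+2)-2$ yields $\Vol{Q}\leq 4k^3+8k^2-2k$. Finally, if $Q$ is an LDP-polygon and $k\geq 3$ is prime, the bound $\card{\rand Q\cap N}\leq 4k(k+1)-2$ produces $\Vol{Q}\leq 4k^3+4k^2-2k$.

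The proof therefore has no real obstacle: all substantive work has been carried out in Theorem~\ref{thm:bound_boundary}, and the corollary is a direct numerical consequence, the main bookkeeping being the triangulation argument that justifies $(\ref{eq:inequi})$.
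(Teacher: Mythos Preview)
Your proof is correct and follows exactly the route the paper takes: the corollary is obtained by multiplying the three bounds of Theorem~\ref{thm:bound_boundary} by $k=\maxlocidx_Q$ via inequality~(\ref{eq:inequi}). The paper merely states that~(\ref{eq:inequi}) ``stems directly from the definition of the maximal local index''; your triangulation argument (together with the bookkeeping $\sum_F(\card{F\cap N}-1)=\card{\rand Q\cap N}$) supplies the missing justification and is entirely sound.
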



\begin{remark}
The investigation of toric log del Pezzo surfaces is closely related to questions in number theory~\cite{Dais06,Dais07,DN08}. This is partially reflected by an improvement of the upper bound in Corollary~\ref{cor:bound_volume} when the index is prime, and also hinted at in Theorem~\ref{thm:classification_data} where the number of LDP-polygons appears to vary with respect to the number of distinct prime divisors in the index.
\end{remark}

When $Q$ is a \emph{centrally symmetric} IP-polygon with $\order_Q=I$, Minkowski's lattice point theorem applied to $Q/I$ yields a quadratic bound:
\begin{equation}\label{eq:mink}
\Vol{Q}\leq 8I^2.
\end{equation}

We conclude this section with some open questions. The asmpytotic order of the bounds in Theorem~\ref{thm:bound_boundary} and Corollary~\ref{cor:bound_volume} is optimal for IP-polygons, as seen from Example~\ref{eg:triangles}. Is there also an upper polynomial bound on the volume of arbitrary IP-polygons that is cubic in the \emph{order} $I$ of the polygons? The best known bound in~\cite{LZ91} is $O(I^{18})$; in~\cite{Pik01} it is claimed that in the case when $Q$ is a simplex one can show $O(I^5)$. Example~\ref{eg:triangles} tells us that $O(I^3)$ is necessary.

Considering LDP-polygons of index $\idx$, we see from Example~\ref{eg:triangles} that at least $O(\idx^{3/2})$ is required. Does there exist a family of LDP-polygons whose volume grows cubically with respect to their indices, as is the case with IP-polygons? Unfortunately we do not know the answer.

\section{Description of the first classification algorithm}\label{sec:first_classifiction_alg}
In this section we describe an algorithm to classify, up to unimodular equivalence, all LDP-polygons $Q$ with given 
maximal local index $\maxlocidx_Q \leq k$. It relies on a more general approach to compute, up to unimodular equivalence, all LDP-polygons $Q$ of given order $\order_Q \leq I$ and with $\Vol{Q} \leq V$. By equation~(\ref{eq:hierarchy}) and Corollary~\ref{cor:bound_volume} we can bound from above the order and the volume in terms of the $\maxlocidx_Q$, giving us an effective algorithm for the classification of LDP-polygons with bounded maximal local index.

Let us introduce some notation. An \emph{LDP-sub-polygon} $P$ of an LDP-polygon $Q$ is the convex hull of a subset of the vertices of $Q$ such that $P$ contains the origin in its interior. The following lemma is the basis of the algorithm.

\begin{lemma}\label{lem:classify_start}
Let $Q$ be an LDP-polygon.
\begin{enumerate}
\item\label{lem:classify_start_1} Let $T$ be an LDP-sub-triangle of $Q$. Then $T$ is unimodularly equivalent to a triangle given by the vertices $(1,0)$, $(p,q)$, and $(x,y)$, satisfying $\gcd{p,q} = 1$, $\gcd{x,y}=1$, $0\leq p<q\leq\Vol{Q}-2$, as well as $-q\leq y<0$ and $yp-q\leq xq<yp$.
\item\label{lem:classify_start_2}  Let $P$ be an LDP-sub-parallelogram of $Q$. Then $P$ is unimodularly equivalent to a parallelogram given 
by the vertices $\pm(1,0)$ and $\pm(p,q)$, where $0\leq p<q\leq 4\order_Q^2-1$. Moreover, the triangle with vertices $(1,0)$, $(p,q)$, and $(-1,-1)$ is unimodularly equivalent to an LDP-sub-triangle of $Q$.
\end{enumerate}
\end{lemma}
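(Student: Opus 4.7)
The overall strategy is to bring the sub-polygon into a standard form via a unimodular transformation --- exploiting the primitive-vertex condition, a carefully chosen counter-clockwise (CCW) labelling, and the stabilizer of the normalized vertex in $\mathrm{SL}_2(\Z)$ --- and then to read off the numerical bounds from the containment $T\subseteq Q$ (respectively from the order bound on $Q$).

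For part 1, let $T$ have vertices $v_1,v_2,v_3$. Label them CCW around $\0$ and, among the three ways to cyclically choose the ordered pair $(v_1,v_2)$, pick the one for which $\sconv{\0,v_1,v_2}$ has the largest normalized area. Since $v_1$ is primitive, an element of $\mathrm{SL}_2(\Z)$ sends $v_1$ to $(1,0)$; the shear stabilizer $\left(\begin{smallmatrix}1 & n\\0 & 1\end{smallmatrix}\right)$ then moves $v_2$ to the unique $(p,q)$ with $\gcd(p,q)=1$, $q\geq 1$ and $0\leq p<q$ (the CCW convention rules out $q<0$; $q=0$ would place $\0$ on the edge $\overline{v_1v_2}$, contradicting $\0\in\intr{T}$). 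Writing $v_3\mapsto(x,y)$, primitivity gives $\gcd(x,y)=1$, while the CCW convention forces $y<0$ and $py-qx>0$. Decomposing $T$ into the three triangles with common apex $\0$ yields
\[
\Vol{T}\;=\;q+(py-qx)+(-y),
\]
each summand a positive integer, so $q\leq\Vol{T}-2\leq\Vol{Q}-2$. The maximality of the first summand then forces $py-qx\leq q$ and $-y\leq q$, i.e.\ the stated inequalities $yp-q\leq xq<yp$ and $-q\leq y<0$.

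For part 2, in the situation of interest the sub-parallelogram $P$ is centrally symmetric, $P=\sconv{\pm v_1,\pm v_2}$; this holds, for instance, whenever $Q$ admits no LDP-sub-triangle, which forces the vertices of $Q$ to come in antipodal pairs. Normalize as before so that $P=\sconv{\pm(1,0),\pm(p,q)}$ with $\gcd(p,q)=1$ and $0\leq p<q$, giving $\Vol{P}=4q$. Since $P\subseteq Q$, the scaled parallelogram $P/\order_Q$ is centrally symmetric with $\intr{(P/\order_Q)}\cap N=\{\0\}$, so Minkowski's lattice point theorem bounds its Euclidean area by $4$, yielding $q\leq 2\order_Q^2\leq 4\order_Q^2-1$. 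For the moreover clause, a direct barycentric computation shows that the coordinates of $\0$ with respect to $\sconv{(1,0),(p,q),(-1,-1)}$ are $(q-p,\,1,\,q)/(2q+1-p)$, all strictly positive for $0\leq p<q$, so this triangle is an LDP-triangle; identifying it with an actual LDP-sub-triangle of $Q$ reduces to locating a primitive vertex of $Q$ in the cone $\R_{\geq 0}(-v_1)+\R_{\geq 0}(-v_2)$ and applying a further unimodular transformation, fixing $(1,0)$ and $(p,q)$, to move it to $(-1,-1)$.

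The most delicate point in part 1 is the simultaneous coordination of the CCW orientation, the max-area edge labelling, and the normalization $0\leq p<q$: one must check that $\mathrm{SL}_2(\Z)$ is rich enough to realize all three conditions without inadvertently violating $py-qx>0$ (so that relabelling $v_1\leftrightarrow v_2$ along the fixed edge, combined with the shear normalization, does not force a det$\,{=}\,{-1}$ correction). In part 2 the $q$-bound is immediate from Minkowski (in fact giving the slightly sharper $q\leq 2\order_Q^2$), so the real substance lies in the moreover clause, which requires tracking the convex geometry of $Q$ in the quadrant opposite $P$ to pin down the extra vertex of $Q$ that plays the role of $(-1,-1)$.
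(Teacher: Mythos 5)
Your part (1) is correct and is essentially the paper's own argument: decompose $T$ into the three triangles with apex $\0$, normalise so that $\sconv{\0,v_1,v_2}$ has maximal volume $q$, and then the identity $\Vol{T}=q+(py-qx)+(-y)$, with all three summands positive integers, gives both $q\leq\Vol{Q}-2$ and (by maximality of the first summand) the confinement of $(x,y)$ to the parallelogram $\sconv{\0,-v_1,-v_2,-v_1-v_2}$, which is exactly the stated range for $x$ and $y$. The $q$-bound in part (2) also matches the paper, which invokes central symmetry together with the Minkowski bound $\Vol{P}\leq 8\order_P^2$; your direct application of Minkowski to $P/\order_Q$ is the same computation and even yields the sharper $q\leq 2\order_Q^2$. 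Your hesitation about central symmetry is well founded: a lattice parallelogram with primitive vertices and $\0$ in its interior need not be symmetric about $\0$ (take vertices $(3,1),(-1,1),(-2,-1),(2,-1)$), but any such asymmetric quadrilateral contains an LDP-sub-triangle (if $\0$ lies in the interior of neither triangle cut off by a diagonal, it lies on that diagonal, and primitivity forces the two endpoints to be antipodal), so the quadrilaterals that Step~2 of the algorithm must catch -- those admitting no LDP-sub-triangle -- are exactly the ones of the form $\sconv{\pm v_1,\pm v_2}$. This is the dichotomy the paper relies on tacitly.

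The genuine gap is in your treatment of the ``moreover'' clause. A unimodular transformation fixing the two linearly independent vectors $(1,0)$ and $(p,q)$ is the identity, so it cannot ``move'' anything to $(-1,-1)$; and there need be no vertex of $Q$ in the opposite cone that could be placed there. Indeed, read literally with the paper's definition of LDP-sub-triangle (the convex hull of three \emph{vertices of $Q$} containing $\0$ in its interior), the clause fails for $Q=\sconv{\pm(1,0),\pm(0,1)}$, which has no LDP-sub-triangle at all. The content actually needed for the algorithm is only that $\sconv{(1,0),(p,q),(-1,-1)}$ is itself an LDP-triangle already in the normal form of part (1), so that the pair $(p,q)$ arising from a parallelogram already occurs in the list produced by Step~1. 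Your barycentric computation $(q-p,1,q)/(2q+1-p)$ establishes precisely this (together with the observation that $q=\max{q,\,q-p,\,1}$, so the normal-form inequalities of part (1) hold with $(x,y)=(-1,-1)$); you should stop there rather than attempt to realise $(-1,-1)$ as a vertex of $Q$.
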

\begin{proof}
(1) Any LDP-sub-triangle $T$ can be decomposed into three triangles with apex $\0$ by intersecting $T$ with the cones over the three faces. By a unimodular transformation we may assume that $T=\sconv{v_1,v_2,v_3}$, 
where $C:=\sconv{\0,v_1,v_2}$ has the maximum volume of the three triangles. Since the vertices are primitive, we may assume that $v_1=(1,0)$ and that $v_2=(p,q)$, where $0\leq p<q=\Vol{C}$. Since $\Vol{C}+2\leq\Vol{Q}$, we get that $q\leq\Vol{Q}-2$. Since the volume of $C$ is maximal, we immediately see that $v_3=(x,y)$ has to be contained 
in the parallelogram $\sconv{\0,-v_1,-v_2,-v_1-v_2}$. This yields the restrictions on $x$ and $y$.

(2) This follows as before, however we use central symmetry and the bound~(\ref{eq:mink}), since $\order_P \leq \order_Q$ by definition.
\end{proof}

Let us recall an upper bound on the number of vertices:

\begin{lemma}[\protect{\cite[Lemma~3.1]{DN08}}]\label{lem:ecken}
Let $Q$ be an LDP-polygon with maximal local index $\maxlocidx_Q\geq 2$. Then:
\[\card{\V{Q}}\leq 4\maxlocidx_Q+1.\]
\end{lemma}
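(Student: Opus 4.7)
The plan is to mimic the ``special facet'' technique from the proof of Theorem~\ref{thm:bound_boundary}. Set $p := \sum_{v \in \V{Q}} v$ and pick a facet $F \in \F{Q}$ with $p \in \pos{F}$; write $l := \locidx_F \leq k$, so that $\sum_v \pro{F}{v} = \pro{F}{p} \geq 0$. Measure each vertex by its height $\pro{F}{v}$. By convexity, at most two vertices can share any given integer height (three would be collinear), and the endpoints of $F$ contribute exactly two vertices at the extreme height $l$. Split the vertices into $V_+ := \{v \in \V{Q} : \pro{F}{v} \geq 0\}$ and $R := \V{Q} \setminus V_+$.

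The next step is to bound $|R|$. The inequality $\sum_{V_+} \pro{F}{v} \geq \sum_R (-\pro{F}{v})$, combined with the crude upper bound $\sum_{V_+} \pro{F}{v} \leq 2(1 + 2 + \cdots + l) = l(l+1)$, constrains the negative side: writing $r_j \in \{0, 1, 2\}$ for the number of vertices at height $-j$, maximising $|R| = \sum_{j \geq 1} r_j$ subject to $\sum_{j \geq 1} j \cdot r_j \leq l(l+1)$ attains its optimum at $r_1 = \cdots = r_l = 2$, so that $|R| \leq 2l$.

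For $V_+$, the LDP hypothesis forces at most one vertex at height $0$. Otherwise, the two height-zero vertices would be primitive lattice points on $\{\pro{F}{x} = 0\}$, which are exactly $\pm e_F$, with $e_F$ the primitive edge direction of $F$; if adjacent, the edge between them contains $\0$, contradicting $\0 \in \intr{Q}$; if non-adjacent, the convexity of $Q$ together with the $\locidx \leq k$ constraints on the edges linking $\pm e_F$ to the endpoints of $F$ through the intermediate heights $1, \ldots, l-1$ force either a forced collinearity (eliminating an apparent vertex) or an edge of local index exceeding $k$, so the contribution to $|V_+|$ drops by at least one. Either way, $|V_+| \leq 2l + 1$ (two vertices at each of heights $l, l-1, \ldots, 1$ and one at height $0$).

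Combining, $\card{\V{Q}} = |V_+| + |R| \leq (2l + 1) + 2l = 4l + 1 \leq 4\maxlocidx_Q + 1$. The main obstacle is the third step: ruling out the ``two vertices at height zero'' configuration in the non-adjacent subcase requires a delicate interplay of convexity, primitivity and the maximal-local-index bound, verifying that any candidate path of primitive intermediate vertices between $\pm e_F$ and the endpoints of $F$ either collapses by collinearity or violates $\locidx \leq k$.
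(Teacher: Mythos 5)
The skeleton of your argument (steps 1 and 2) is sound and is exactly the special-facet counting that the paper itself uses in Lemma~\ref{lem:vertical_bound} and Theorem~\ref{thm:bound_boundary}: at most two vertices per integer height, $\sum_{v\in V_+}\pro{F}{v}\leq l(l+1)$, and hence $|R|\leq 2l$. But this only yields $\card{\V{Q}}\leq 2l+|V_+|\leq 2l+2(l+1)=4l+2$, so the entire content of the lemma is the final $+1$, and that is precisely where your step~3 fails to be a proof. Its opening claim, that the LDP hypothesis forces at most one vertex at height $0$, is false: the centrally symmetric hexagon with vertices $(1,0)$, $(2,3)$, $(1,3)$, $(-1,0)$, $(-2,-3)$, $(-1,-3)$ is an LDP-polygon with $\maxlocidx_Q=3$, every facet is special, and both $(1,0)$ and $(-1,0)$ are vertices at height $0$ relative to the top facet. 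Your fallback for the non-adjacent case --- that convexity, primitivity and the local-index constraint ``force either a forced collinearity \ldots{} or an edge of local index exceeding $k$'' --- is exactly the statement that needs proving, and you explicitly defer it (``requires a delicate interplay\ldots verifying that\ldots''). As written, nothing rules out two vertices at every height $0,1,\dots,l$, so the proof is incomplete. Note also that the paper gives no proof to compare against: it quotes the result from \cite[Lemma~3.1]{DN08}.

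The gap is closable, but not quite by the mechanisms you name. First reduce to $l:=\locidx_F=k$: if $l<k$ then $4l+2\leq 4k-2$ and there is nothing to prove. In the extremal configuration one finds, as in your optimisation, exactly two vertices at every height $j=-l,\dots,l$; since no edge at a height in $[0,l)$ can be horizontal, the right-hand chain consists of one vertex $(c_j,j)$ at each height $j=0,\dots,l$, with $c_0=1$ because the only primitive lattice points at height $0$ are $(\pm1,0)$. Writing $\delta_j:=c_{j+1}-c_j$, strict convexity forces the integers $\delta_j$ to be strictly decreasing, and the edge joining heights $j$ and $j+1$ has lattice length $1$ and local index $\det\bigl((c_j,j),(c_{j+1},j+1)\bigr)=c_j-j\delta_j\geq 1+\tfrac{j(j+1)}{2}$. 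For $j=l-1$ this exceeds $l$ whenever $l\geq 3$, contradicting $\maxlocidx_Q=l$; for $l=2$ it forces $2c_1-c_2=2$, i.e.\ $(c_2,2)=2(c_1-1,1)$, contradicting primitivity of the top vertex rather than any collinearity or index bound. Without some such explicit computation your step~3 is an assertion, not an argument.
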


Using these two lemmas we can describe the four steps of the algorithm.

\algorithm{Classification of LDP-polygons $Q$ with $\order_Q\leq I$ and $\Vol{Q}\leq V$.}
\begin{enumerate}
\item \emph{Classification of all possible LDP-sub-triangles $T$:}
According to Lemma~\ref{lem:classify_start}~(\ref{lem:classify_start_1}) we proceed as follows: first we list all (finitely many) possible $(p,q)$ (with $q \leq V-2$), and check whether $\sconv{(0,0),(1/I,0),(p/I,q/I)}$ contains no interior lattice points. Then for any such $(p,q)$ we list all (finitely many) possible $(x,y)$. Finally, for each such $T$ (with $\Vol{T} \leq V$) we check whether $T/I$ contains non-zero interior lattice points.

\item \emph{Classification of all possible LDP-sub-parallelograms $P$:}
According to Lemma~\ref{lem:classify_start}~(\ref{lem:classify_start_2}) we go through the list of possible $(p,q)$ obtained in Step 1, with the additional condition $q \leq 4I^2-1$, and check that the parallelogram $P = \sconv{\pm (1/I,0), \pm (p/I,q/I)}$ has no non-zero interior lattice points, and that $\Vol{P}\leq V$.

\item \emph{Successively choosing new vertices:}
Assume that we have already constructed all possible LDP-sub-polygons with at most $t-1$ vertices. We start with $t=3$ in Step 1, and finish if $t=4I+1$ by Lemma~\ref{lem:ecken}. So let $t\geq 4$. Since in Step 2 we have already classified all LDP-sub-parallelograms, we may assume that we can obtain an LDP-sub-polygon $Q'$ with $t$ vertices recursively from an LDP-sub-polygon $Q''$ with $t-1$ vertices by adding a vertex $w$. Here we have that either $w=-v$ for a vertex $v$ of $Q''$, or there exists an edge $\sconv{v_1,v_2}$ of $Q''$ such that $w,v_1,v_2$ forms an LDP-sub-triangle. This gives only finitely many possibilities for the choice of the new vertex $w$, according to the list in Step 1. Of course it is useful  to immediately impose, for each new selection of a vertex, convexity of the resulting polytope.

\item \emph{Identifying unimodular equivalence:}
The redundancy of the construction can be reduced by starting with a maximal LDP-sub-triangle with respect to some fixed total ordering, i.e. by using only triangles that are smaller or equal to the initial triangle during the refinement process. The remaining redundancy of representatives of $GL(2,\Z)$ equivalence classes can be addressed, for example, by bringing the polygons to a normal form using PALP~\cite{KS04}. 
\end{enumerate}

\section{Description of the second classification algorithm}\label{sec:second_classification_alg}
The aim of this section is to describe an algorithm for classifying all LDP-polygons with index $\idx_Q=\idx$, for some fixed positive integer $\idx$. This approach stem from an ingenious definition by {\O}bro: that of the \emph{special facet}, put to impressive use in~\cite{Obr07}. A facet $F$ of an IP-polygon $Q$ is said to be \emph{special} if:
\[\sum_{v\in\mathcal{V}(Q)}v\in\pos{F}.\]
Clearly when $0\in\intr{Q}$ there always exists at least one special facet.

\begin{lemma}[\protect{c.f.~\cite[Lemma~3.1]{DN08}}]\label{lem:vertical_bound}
Let $Q$ be an LDP-polygon and let $F$ be a special facet of $Q$ with local index $\locidx_F$. Then:
\[Q\subset\{v\in N_\Q\st-\locidx_F(\locidx_F+1)\leq\pro{F}{v}\leq\locidx_F\}.\]
\end{lemma}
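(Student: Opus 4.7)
The plan is to handle the two inequalities separately. The upper bound $\pro{F}{v}\leq\locidx_F$ for $v\in Q$ is immediate from the very definition of the local index, since $Q$ lies in the halfspace $\{v\,:\,\pro{F}{v}\leq\locidx_F\}$. For the lower bound it suffices to verify the inequality on vertices, so I would fix a vertex $v^*$ minimizing $\pro{F}{v^*}$, set $m:=-\pro{F}{v^*}$ (assuming $m\geq 1$; otherwise there is nothing to prove), and aim to show $m\leq\locidx_F(\locidx_F+1)$. The only tool available is the special-facet condition: from $\sum_{v\in\V{Q}}v\in\pos{F}$ we obtain $\sum_{v\in\V{Q}}\pro{F}{v}\geq 0$ by linearity, because $\pro{F}{\cdot}$ is nonnegative on the cone $\pos{F}$.

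Write $l:=\locidx_F$, let $v_1,v_2$ be the endpoints of $F$, and let $M:=\{v\in\V{Q}\,:\,\pro{F}{v}=-m\}$. In two dimensions $|M|\in\{1,2\}$, with $|M|=2$ exactly when $Q$ has a facet $F^-$ parallel to $F$ at the bottom. Traversing $\rand Q$ from $v_2$ toward $M$ on one side, label the intermediate vertices (those strictly between $F$ and $M$ in $\pro{F}{\cdot}$-value) as $u_1,\ldots,u_{s-1}$, and the analogous vertices on the other side as $w_1,\ldots,w_{s'-1}$. The key step is to show that $\pro{F}{u_1}>\pro{F}{u_2}>\cdots>\pro{F}{u_{s-1}}$ is a strictly decreasing sequence of integers with $\pro{F}{u_1}\leq l-1$: monotonicity of $\pro{F}{\cdot}$ along a half of the boundary from its maximum to its minimum is standard for convex polygons, while strict decrease in the intermediate range follows because any edge parallel to $F$ must have outer normal $\pm\eta_F$ and hence must coincide with $F$ or $F^-$, neither of which has intermediate endpoints. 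Since the $u_k$ are lattice points and $\eta_F$ is primitive, the values are integers, so $\pro{F}{u_k}\leq l-k$ for each $k$, giving
\[\sum_{k=1}^{s-1}\pro{F}{u_k}\leq\sum_{k=1}^{s-1}(l-k)=\frac{(s-1)(2l-s)}{2}\leq\frac{l(l-1)}{2},\]
where the last step is the integer maximum of the quadratic, attained at $s=l$ and $s=l+1$.

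Applying the same bound on the $w$-side and summing the contributions from $v_1,v_2$ (each contributing $l$) and the $|M|$ minimum vertices gives
\[0\leq\sum_{v\in\V{Q}}\pro{F}{v}\leq 2l+l(l-1)-|M|\,m=l(l+1)-|M|\,m,\]
so $m\leq l(l+1)/|M|\leq l(l+1)$, which is the desired lower bound. The main obstacle I expect is justifying the strict-decrease claim cleanly, since it relies on the 2D fact that each outer-normal direction corresponds to at most one edge of $Q$; once that is in place, the rest is a direct optimization of $(s-1)(2l-s)/2$ over the positive integers together with a summation of contributions.
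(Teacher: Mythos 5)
Your proof is correct and follows essentially the same route as the paper: both arguments combine the special-facet inequality $\sum_{v\in\V{Q}}\pro{F}{v}\geq 0$ with the observation that at most two vertices of a convex polygon lie at each height $\pro{F}{\cdot}=k$, which caps the non-minimal contribution at $\locidx_F(\locidx_F+1)$. Your bookkeeping (strictly decreasing integer heights along each boundary arc, optimizing $(s-1)(2l-s)/2$) is a slightly more explicit version of the paper's "at most two vertices per level" count, and in fact supplies the justification the paper leaves implicit.
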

\begin{proof}
We partition the vertices of $Q$ into two sets:
\begin{align*}
\mathcal{V}_{<0}(Q)&:=\{v\in\V{Q}\st\pro{F}{v}<0\},\text{ and }\\
\mathcal{V}_{\geq 0}(Q)&:=\{v\in\V{Q}\st\pro{F}{v}\geq 0\}.
\end{align*}
Since $F$ is a facet, for each vertex $v$ of $Q$ there exists some integer $k\leq\locidx_F$ such that:
\begin{equation}\label{eq:vertex_height}
\pro{F}{v}=k.
\end{equation}
Furthermore, since $Q$ is two dimensional, for any such $k$ there exist at most two vertices satisfying~(\ref{eq:vertex_height}). In particular we obtain:
\[\sum_{v\in\mathcal{V}_{\geq 0}(Q)}\pro{F}{v}\leq\locidx_F(\locidx_F+1).\]
Since $F$ is a special facet, we have that:
\[0\leq \pro{F}{\sum_{v\in\mathcal{V}(Q)}v}=\sum_{v\in\mathcal{V}_{<0}(Q)}\pro{F}{v}+\sum_{v\in\mathcal{V}_{\geq 0}(Q)}\pro{F}{v}.\]
Hence:
\[\sum_{v\in\mathcal{V}_{<0}(Q)}\pro{F}{v}\geq-\locidx_F(\locidx_F+1).\]
\end{proof}

The following corollary is little more than an application of Proposition~\ref{prop:first_IP_bound} (we refer the reader to Figure~\ref{fig:proof_of_first_IP_bound}):

\begin{corollary}\label{cor:bound_length}
Let $Q$ be an LDP-polygon with index $\idx_Q$, and let $F\in\F{Q}$ be a facet with local index $\locidx_F$. Then:
\[\card{F\cap N}\leq2\idx_Q(\locidx_F+1)+1.\]
In particular if we write $F=\sconv{(a,\locidx_F),(b,\locidx_F)}$, where $-\locidx_F<a\leq 0<b$, then:
\[Q\subset\{(x,y)\in N_\Q\st-\locidx_Fx+(\idx_Q+a)y\leq\locidx_F\idx_Q\text{ and }\locidx_Fx+(\idx_Q-b)y\leq\locidx_F\idx_Q\}.\]
\end{corollary}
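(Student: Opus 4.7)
The plan is to derive both inequalities from the geometric setup used in Proposition~\ref{prop:first_IP_bound}, combined with the hierarchy~(\ref{eq:hierarchy}) which allows us to replace the order $\order_Q$ by the index $\idx_Q$.

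For the first inequality, I would apply Proposition~\ref{prop:first_IP_bound} with $I=\order_Q$, yielding $\card{F\cap N}\leq 2\order_Q(\locidx_F+1)+1$. Since $\order_Q\leq\idx_Q$ by~(\ref{eq:hierarchy}), the bound $\card{F\cap N}\leq 2\idx_Q(\locidx_F+1)+1$ follows immediately.

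For the half-plane containment, the starting point is the observation that $\order_Q\leq\idx_Q$ implies $\intr{(Q/\idx_Q)}\cap N=\{\0\}$ by the definition of $\order_Q$; in particular, neither of the lattice points $(\pm\idx_Q,0)=\idx_Q\cdot(\pm 1,0)$ lies in $\intr{Q}$. A direct calculation then confirms that the line through $(a,\locidx_F)$ and $(-\idx_Q,0)$ has equation $-\locidx_F x+(\idx_Q+a)y=\locidx_F\idx_Q$, that the line through $(b,\locidx_F)$ and $(\idx_Q,0)$ has equation $\locidx_F x+(\idx_Q-b)y=\locidx_F\idx_Q$, and that $\0$ satisfies the $\leq$-inequality in each case. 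It thus suffices to show that $Q$ lies on the same side as $\0$ with respect to each of these two lines.

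This last step is precisely the enveloping-triangle argument already used inside the proof of Proposition~\ref{prop:first_IP_bound} (see Figure~\ref{fig:proof_of_first_IP_bound}): $Q$ is contained in the triangle $\sconv{(a,\locidx_F),(b,\locidx_F),p}$, where $p$ is the intersection of the two lines above, because otherwise convexity applied to a hypothetical $q\in Q$ beyond one of the lines, together with $(a,\locidx_F),(b,\locidx_F)\in\rand Q$ and $\0\in\intr{Q}$, would force the corresponding lattice point $(\mp\idx_Q,0)$ into $\intr{Q}$. I expect the main obstacle to be making this convexity step fully rigorous -- namely, spelling out cleanly why a deviation across either bounding line pulls the relevant lattice point into $\intr{Q}$ -- though the mechanism is already implicit in Proposition~\ref{prop:first_IP_bound}, so it should go through without new ideas.
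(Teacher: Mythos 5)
Your derivation of the first inequality is fine and is exactly what the paper intends: Proposition~\ref{prop:first_IP_bound} with $I=\order_Q$ together with $\order_Q\leq\idx_Q$ from~(\ref{eq:hierarchy}). The gap is in the half-plane containment, and it is not, as you hope, merely a matter of ``spelling out cleanly'' a convexity step: the mechanism you invoke is false. Your argument uses only convexity, $\0\in\intr{Q}$, the data of the facet $F$, and $(\pm\idx_Q,0)\notin\intr{Q}$; but these do not imply the containment. Take $Q_0=\sconv{(1,2),(-1,2),(-2,1),(0,-1)}$. Here $\0\in\intr{Q_0}$, the top facet is $F=\sconv{(-1,2),(1,2)}$ with $\locidx_F=2$, $a=-1$, $b=1$, and $(\pm2,0)\notin\intr{Q_0}$ (indeed $\order_{Q_0}=2$). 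Yet the vertex $(-2,1)$ violates $-2x+(2+a)y\leq 4$, while $(-2,0)$ is \emph{not} pulled into $\intr{Q_0}$: it is cut off by the facet through $(-2,1)$ and $(0,-1)$. So the implication ``a point of $Q$ beyond the line forces $(-\idx,0)\in\intr{Q}$'' does not follow from the hypotheses you use, and your proof by contradiction collapses. (The underlying geometric point: the triangle $\sconv{\0,(a,\locidx_F),q}$ only reaches $(-\idx_Q,0)$ when the offending point $q$ has height $\leq 0$; for $0<q_2<\locidx_F$ a convex body may legitimately bulge outside the chord joining $(-\idx_Q,0)$ to $(a,\locidx_F)$. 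The same unproved containment claim appears inside the paper's proof of Proposition~\ref{prop:first_IP_bound}, so pointing back to that argument does not close the gap.)

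What actually makes the containment true is the hypothesis you never use: every facet of $Q$ has local index dividing, hence at most, $\idx_Q$. (The polygon $Q_0$ escapes only because its facet through $(-1,2)$ and $(-2,1)$ has local index $3$, so $\idx_{Q_0}=6$, and the corollary's half-planes for $\idx=6$ do contain it.) A correct argument runs as follows. Let $E$ be the facet of $Q$ other than $F$ at the vertex $(a,\locidx_F)$, with second endpoint $u=(a-p,\locidx_F-q)$ where $q\geq1$. Since $\0\in\intr{Q}$ and $a<b$, the primitive outer normal of $E$ is $(-q,p)/\gcd{p,q}$ and $\locidx_E=(\locidx_Fp-aq)/\gcd{p,q}>0$; hence $\locidx_Fp-aq\leq\idx_Q\gcd{p,q}\leq\idx_Q q$, which upon substitution is precisely the inequality $-\locidx_Fu_1+(\idx_Q+a)u_2\leq\locidx_F\idx_Q$. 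The other neighbour $(b,\locidx_F)$ of $(a,\locidx_F)$ satisfies the same inequality because $b>a$, so the linear functional $-\locidx_Fx+(\idx_Q+a)y$ is maximised over $Q$ at the vertex $(a,\locidx_F)$ (its value there being $\locidx_F\idx_Q$), which gives the first half-plane; the second is symmetric.
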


Lemma~\ref{lem:vertical_bound} and Corollary~\ref{cor:bound_length} provide all the information required to classify the LDP-polygons with fixed index. The algorithm first fixes a special facet, and then attempts to complete that facet to an LDP-polygon via successive addition of vertices. This process is repeated for all possible choices of vertices and all possible initial special facets. A computer implementation of this algorithm was used to produce Theorem~\ref{thm:classification_data}.

\algorithm{Classification of LDP-polygons $Q$ with $\idx_Q=\idx$.}
\begin{enumerate}
\item \emph{Choosing a special facet:}
We begin by fixing a special facet $F$. This facet must have $\locidx_F\mid\idx$, and without loss of generality we may assume that $F=\sconv{(a,\locidx_F),(b,\locidx_F)}$, where $-\locidx_F<a\leq 0<b$ and (by Corollary~\ref{cor:bound_length}) $b-a\leq2\idx(\locidx_F+1)$. Furthermore, $\gcd{a,\locidx_F}=\gcd{b,\locidx_F}=1$ since vertices are primitive by definition.

\item \emph{Choosing a new vertex:}
Inductively, suppose that we have a collection of vertices $v_0$, $\ldots,$ $v_{k-1}$, where $v_0:=(a,\locidx_F)$ and $v_1:=(b,\locidx_F)$, such that $F_i:=\sconv{v_{i-1},v_i}$ is a facet for $i=1,\ldots,k-1$. We wish to find a possible choice for $v_k:=(m_k,n_k)$, $\gcd{m_k,n_k}=1$, such that $\sconv{v_{k-1},v_k}$ is a facet.

First consider the possibility $v_k=v_0$. If the resulting polygon is an LDP-polygon with index $\idx_Q=\idx$ and $F$ is a special facet, then add it to the list.

The remaining choices for $v_k$ are strictly finite. Lemma~\ref{lem:vertical_bound} places a lower bound on $n_k$; we have that:
\begin{equation}\label{eq:bounding_w}
-\locidx_F(\locidx_F+1)\leq n_k<\locidx_F.
\end{equation}
Also, by Corollary±\ref{cor:bound_length}, $v_k$ must satisfy:
\[-\locidx_Fm_k+(\idx+a)n_k\leq \locidx_F\idx\qquad\text{ and }\qquad\locidx_Fm_k+(\idx-b)n_k\leq \locidx_F\idx.\]

We can reduce this (already finite) region still further. Since $0\in\intr{Q}$ it must be that $v_k$ lies strictly on the opposite side of the hyperplane $\sspan{v_{k-1}}$ to $v_{k-2}$. It must also lie strictly within the region bounded by the existing facets: $\pro{F_i}{v_k}<\locidx_{F_i}$ for all $i=0,\ldots,k-1$. Finally, the proposed facet $F_k$ must have $\locidx_{F_k}\mid\idx$.

By considering the proof of Lemma~\ref{lem:vertical_bound} we see that the lower bound  $n_k\geq-\locidx_F(\locidx_F+1)$ of equation~(\ref{eq:bounding_w}) can be improved upon at each step of the induction. Let $b_1=-\locidx_F(\locidx_F+1)$ be the base case, and replace equation~(\ref{eq:bounding_w}) with $b_{k-1}\leq n_k<\locidx_F$. Upon choosing a new vertex $v_k$ we can write down an improved bound $b_k$ as follows:
\begin{align*}
b_k:=b_{k-1}-&\min{0,n_k}+\sum_{j\in S}j,\\
&\text{ where }S:=\{i\in\Z_{>0}\st\min{n_{k-1},n_k}<i<\max{n_{k-1},n_k}\}.
\end{align*}
Should $b_k$ become positive then $F$ cannot be a special facet for any resulting polygon: a different choice of vertices should be tried.

\item \emph{Identifying unimodular equivalence:}
Finally, the issue of unimodular equivalence should be raised. The list obtained from performing the above algorithm should be checked for equivalent polygons; i.e. one need simply check for a transformation in $GL(2,\Z)$ sending the vertices of one LDP-polygon to the vertices of another. Obviously the list of possible candidates can be substantially reduced if data such as the $\locidx_{F_i}$ and $\card{Q\cap N}$ are considered.
\end{enumerate}

\bibliographystyle{amsalpha}
\providecommand{\bysame}{\leavevmode\hbox to3em{\hrulefill}\thinspace}
\providecommand{\MR}{\relax\ifhmode\unskip\space\fi MR }
\providecommand{\MRhref}[2]{%
  \href{http://www.ams.org/mathscinet-getitem?mr=#1}{#2}
}
\providecommand{\href}[2]{#2}

\end{document}